\newtheorem{thm}{Theorem}[section]
\newtheorem{cor}[thm]{Corollary}
\newtheorem{prop}[thm]{Proposition}
\newtheorem{defin}[thm]{Definition}
\newtheorem{lema}[thm]{Lemma}
\newtheorem{Remark}[thm]{Remark}
\theoremstyle{definition}
\newtheorem{ex}{Example}
\newcommand{\PP}{{\mathbb{P}}}
\def\p{\mathbb P}
\def\Z{\mathbb{Z}}
\def\P{\mathbb{P}}
\def\rk{\operatorname{rk}}
\begin{document}

\title{Moduli of Cubic fourfolds and reducible OADP surfaces}

\author[Michele Bolognesi]{Michele Bolognesi}
\address{Institut Montpellierain Alexander Grothendieck \\ %
Universit\'e de Montpellier \\ %
CNRS \\ %
Case Courrier 051 - Place Eug\`ene Bataillon \\ %
34095 Montpellier Cedex 5 \\ %
France}
\email{michele.bolognesi@umontpellier.fr}
\thanks{M.B. is supported by the ANR project FanoHK (ANR-20-CE40-0023). MB is a member of the Réseau thématique \it Géométrie algébrique et singularités, \rm and of the research group \it  GNSAGA \rm}

\author[Z.Brahimi]{Zakaria Brahimi}
\address{Dipartimento di Matematica e Fisica, Universit\'a Roma Tre, Largo San Leonardo Murialdo  00146 Roma
Italy} 
 \email{zakaria.brahimi@uniroma3.it}

\author[H.Awada]{Hanine Awada}
\address{BCAM, Basque Center for Applied Mathematics, Mazarredo 14, 48009 Bilbao,
Basque Country, Spain}
\email{hawada@bcamath.org}
\thanks{H.A. was supported by the Spanish Ministry of Science through the Severo Ochoa Grant SEV
2023-2026 and through the research project PID2020-114750GB-C33 and by the Basque
Government through the BERC 2022-2025 program.}

\maketitle

\begin{abstract}
In this paper we explore the intersection of the Hassett divisor $\mathcal C_8$, parametrizing smooth cubic fourfolds $X$ containing a plane $P$ with other divisors $\mathcal C_i$. Notably we study the irreducible components of the intersections with $\mathcal{C}_{12}$ and $\mathcal{C}_{20}$. These two divisors generically parametrize respectively cubics containing a smooth cubic scroll, and a smooth Veronese surface. First, we find all the irreducible components of the two intersections, and describe the geometry of the generic elements in terms of the intersection of $P$ with the other surface. Then we consider the problem of rationality of cubics in these components, either by finding rational sections of the quadric fibration induced by projection off $P$, or by finding examples of reducible one-apparent-double-point surfaces inside $X$. Finally, via some Macaulay computations, we give explicit equations for cubics in each component.

\end{abstract}

\section{Introduction}

Cubic hypersurfaces in $\PP^5$ are among the most studied, and at the same time the most mysterious objects in algebraic geometry. The reason is probably the wealth of geometry that they contain, and the fact that there are lots of very basic and classical problems that have not been cleared out yet, for example the rationality of the generic cubic fourfold. The study of their moduli space, particularly through GIT and the period map, has seen some very striking advances in recent years, see for example  \cite{voisin,laza,Loij}, and the study of rationality has been developed in parallel to this. In particular Hassett \cite{Ha2} has described a countable infinity of divisors $\mathcal C_d$ that parametrize \it special cubic 4-folds, \rm that is \rm cubic hypersurfaces  containing  a surface not homologous to a complete intersection. Indeed, the only examples of rational cubic fourfolds known so far are contained in some very specific Hassett divisors \cite{RS1,RS2,BRS,BD,ABBVA,Fano, awada,AHTVA}.

\medskip

In the first part of this paper, we consider a problem that has been addressed for the first time in \cite{ABBVA}. We consider the intersections of $\mathcal C_8$ with other Hassett divisors, notably $\mathcal C_{12}$ and $\mathcal C_{20}$. The general cubic fourfolds in these two divisors contain, respectively, a cubic scroll and a Veronese surface. By means of lattice theory, we describe all the irreducible components of $\mathcal{C}_8\cap \mathcal{C}_{20}$ and $\mathcal{C}_8\cap \mathcal{C}_{12}$.

Here are our results.
\begin{thm} (= Thm. \ref{cap812} and Lemma \ref{12coincide} ) There are three irreducible components of $\mathcal C_8\cap \mathcal C_{12}$ 	indexed by  the value $P\cdot S =\epsilon \in\{1, 2, 3\}$, where $P$ is a plane and $S$ the class of a cubic rational normal scroll (that is $S\cdot S=7$ and $S\cdot h^2=3)$ contained in the cubic fourfold. For $\epsilon=2$, every element in the corresponding irreducible component	is rational. For the general element of the irreducible component corresponding to $\epsilon=1$ the plane $P$ and the surface $S$ intersect at a point. 
\end{thm}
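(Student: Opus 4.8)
The plan is to work entirely within the lattice-theoretic framework that governs Hassett divisors. Each special cubic fourfold $X \in \mathcal{C}_d$ carries a saturated rank-2 sublattice $K \subset H^4(X,\Z)$ containing the square of the hyperplane class $h^2$, with discriminant $d$. The intersection $\mathcal{C}_8 \cap \mathcal{C}_{12}$ therefore corresponds to cubics whose Hodge lattice contains a rank-3 sublattice spanned by $h^2$, a plane $P$ (with $P^2 = 3$, coming from the $\mathcal{C}_8$ condition), and a cubic scroll $S$ (with $S^2 = 7$, $S \cdot h^2 = 3$, coming from $\mathcal{C}_{12}$). First I would write down the Gram matrix of this rank-3 lattice in the basis $\{h^2, P, S\}$, leaving the single off-diagonal entry $P \cdot S = \epsilon$ as the free parameter, and compute its determinant as a function of $\epsilon$.

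The key combinatorial step is to determine which integer values of $\epsilon$ actually produce an admissible lattice — that is, one that embeds primitively in the cubic lattice $H^4(X,\Z) \cong I_{21,2}$ and that is positive definite on the primitive part (or satisfies Hassett's admissibility/non-degeneracy criteria so that the locus is a genuine divisor intersection rather than empty or contained in the discriminant). Here I expect the main obstacle: pinning down exactly the range $\epsilon \in \{1,2,3\}$ requires ruling out $\epsilon \le 0$ and $\epsilon \ge 4$ by a geometric positivity/Hodge-index argument together with the lattice-embedding constraints, and then separately verifying that each of the three surviving values yields a nonempty, irreducible family. Irreducibility should follow from the standard fact (as in Hassett and as used in \cite{ABBVA}) that a nonempty locus determined by a fixed admissible labelling is irreducible, because it is dominated by an irreducible parameter space of pairs (cubic, embedded sublattice). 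The three distinct values of $\epsilon$ then give three distinct components, and the claim that they are \emph{all} the components amounts to exhausting the labellings, which the Gram-matrix computation does.

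For the geometric refinements I would argue as follows. For $\epsilon = 1$, I want to show that on the general member the plane $P$ and the scroll $S$ meet in a single point. The intersection number $P \cdot S$ in $H^4$ records the topological intersection, so $P \cdot S = 1$ forces, for a general cubic in the component where the two surfaces meet transversally, a reduced intersection of length one, i.e.\ a single reduced point; the content is to check that the general member is in fact transverse (the excess-intersection or non-transverse configurations form a proper closed subset), which follows from a dimension count on the incidence variety of cubics containing a plane and a scroll meeting in a prescribed scheme. For the rationality statement when $\epsilon = 2$, I would exhibit rationality uniformly: projection of $X$ away from the plane $P$ realizes $X$ as a quadric surface fibration over $\PP^2$, and the scroll $S$ (meeting $P$ appropriately when $\epsilon = 2$) should furnish a rational multisection, hence a rational section after the standard reduction, giving rationality of the quadric bundle and therefore of $X$. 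Producing and verifying this section — showing the residual intersection of $S$ with the fibers yields odd degree or a genuine section — is the crux of the rationality half and is where I would invest the explicit computation, cross-checked against the Macaulay examples announced in the abstract.
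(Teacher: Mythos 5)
Your lattice set-up (rank-3 Gram matrix in the basis $\{h^2,P,S\}$ with free parameter $\epsilon=P\cdot S$, determinant as a function of $\epsilon$, positivity, primitive-embedding constraints) is exactly the paper's starting point, and your rationality argument for $\epsilon=2$ via an odd-degree multisection ($S\cdot(h^2-P)=3-2=1$) is a legitimate alternative to the paper's route, which instead exhibits $S\cup P$ as a reducible OADP surface. However, there is a genuine gap in how you plan to arrive at the count of three components. You propose to rule out $\epsilon\le 0$ and $\epsilon\ge 4$ by positivity and embedding constraints. That cannot work: positive definiteness only restricts $\epsilon$ to $\{-2,-1,0,1,2,3,4\}$, and the lattice-embedding obstruction (the short-root criterion of Yang--Yu) only eliminates $\epsilon=-2$ and $\epsilon=4$. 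The loci for $\epsilon=-1$ and $\epsilon=0$ are genuinely nonempty --- the paper constructs explicit smooth cubics in which $S\cap P$ is a conic ($\epsilon=-1$) or empty ($\epsilon=0$). The correct reason there are only three components is not that two further values of $\epsilon$ are excluded, but that the five nonempty loci coincide in pairs: $\mathcal{C}_{M_{-1}}=\mathcal{C}_{M_{3}}$ and $\mathcal{C}_{M_{0}}=\mathcal{C}_{M_{2}}$, because the corresponding rank-3 lattices are isometric by an isometry fixing $h^2$. The paper detects this by passing to a normal form of the rank-3 marking in which a parameter $\tau$ and $-\tau$ give isometric lattices, and matching discriminants ($20$ for $\tau=\pm2$, $29$ for $\tau=\pm1$). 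Without this identification step your argument would either get stuck trying to exclude $\epsilon=-1,0$ or would wrongly output five components.

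A secondary gap: you assert irreducibility of each locus as a ``standard fact'' about admissible labellings. For a rank-3 marking this is not automatic; the locus could split according to the saturation of $M_\epsilon$ inside $M(X)$, and the paper runs a case-by-case overlattice analysis (square-free discriminant for $\epsilon=0,2$; for the other values, showing that every candidate overlattice with integral Gram matrix either contains a short root or has non-even primitive part, hence cannot embed in $H^4_{prim}(X,\Z)$). Finally, for $\epsilon=1$ the claim that the general member has $S\cap P$ a single reduced point must also exclude the sublocus where $S\cap P$ is the directrix line of the scroll, which likewise has $S\cdot P=1$ by the excess-intersection formula; the paper settles this with explicit transverse examples, though the incidence-variety dimension count you sketch would serve the same purpose if actually carried out.
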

\begin{thm}(= Thm. \ref{compC208}) There are seven  irreducible components of $\mathcal C_8\cap \mathcal C_{20}$	indexed by  $P\cdot V=\gamma\in\{-2,-1, 0, 1, 2, 3, 4\}$, where $P$ is a plane and $V$ the class of a Veronese surface (that is $V\cdot V=12$ and $V\cdot h^2=4$) contained in the cubic fourfold. For $\gamma=-1,1$ and $3$ each smooth cubic hypersurface belonging to the corresponding irreducible component is rational.\end{thm}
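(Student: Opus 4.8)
The plan is to reduce the statement to a classification of rank-three positive definite lattices, and then to a fibration argument for the rationality claim.

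\textbf{Setting up the lattice.} Any $X\in \mathcal C_8\cap\mathcal C_{20}$ contains a plane $P$ and a surface with the class $V$ of a Veronese, so its lattice of algebraic classes $A(X)=H^{2,2}(X)\cap H^4(X,\Z)$ contains the rank-three lattice $\langle h^2,P,V\rangle$ with Gram matrix
\begin{equation*}
M_\gamma=\begin{pmatrix} 3 & 1 & 4\\ 1 & 3 & \gamma\\ 4 & \gamma & 12\end{pmatrix},\qquad \gamma=P\cdot V .
\end{equation*}
Conversely every such marked cubic lies in the intersection, so the components of $\mathcal C_8\cap\mathcal C_{20}$ are governed by the integer $\gamma$. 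Since $A(X)$ is positive definite for a smooth cubic fourfold (Hodge--Riemann), the first constraint is that $M_\gamma$ be positive definite.

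\textbf{Cutting down the range of $\gamma$.} I would apply Sylvester's criterion: the two leading minors are $3>0$ and $8>0$, while $\det M_\gamma=-3\gamma^2+8\gamma+48$. The inequality $\det M_\gamma>0$ has real roots $\tfrac{8\pm\sqrt{640}}{6}$, hence holds exactly for $\gamma\in\{-2,-1,0,1,2,3,4,5\}$. To remove $\gamma=5$ I would invoke the Laza--Looijenga description of the period image: for a smooth cubic the period point avoids $\mathcal C_2$ and $\mathcal C_6$, so $A(X)$ carries no rank-two sublattice of discriminant $2$ or $6$ through $h^2$. For $\gamma=5$ the vector $v=V-P-h^2$ satisfies $v\cdot v=2$ and $v\cdot h^2=0$, producing a discriminant-$6$ labelling $\langle h^2\rangle\oplus\langle v\rangle$; hence $\gamma=5$ is excluded. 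For each remaining value I would check directly that $M_\gamma$ admits neither a discriminant-$2$ nor a discriminant-$6$ labelling through $h^2$ (equivalently, no class of square $1$ meeting $h^2$ once and no class of square $2$ orthogonal to $h^2$), so all of $\{-2,\dots,4\}$ survive, giving seven candidate components.

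\textbf{Nonemptiness and irreducibility.} For each admissible $\gamma$ I would prove the corresponding locus nonempty and irreducible. Nonemptiness follows from the explicit configurations $(P,V)$ — including the excess cases $\gamma=-1,-2$, where $P$ and $V$ meet along a curve — and from the Macaulay equations produced later in the paper. Irreducibility I would extract from Hodge theory: by Torelli and surjectivity of the period map, the cubics admitting a primitive embedding of a fixed admissible lattice into $A(X)$ form an irreducible locus of codimension two, and the number of components equals the number of $\widetilde O(\Lambda)$-orbits of primitive embeddings of the saturation of $M_\gamma$ into the cubic lattice $\Lambda$. The task is then to verify, via Nikulin's criteria on the discriminant form, that this saturation is determined and its primitive embedding unique up to monodromy, and that distinct values of $\gamma$ give non-isometric marked lattices with no hidden identification (the analogue of the coincidence observed in the $\mathcal C_{12}$ case). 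This uniqueness-and-distinctness check is the technical heart of the classification and the step I expect to be the main obstacle.

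\textbf{Rationality for $\gamma\in\{-1,1,3\}$.} Here I would exploit the plane. Projecting $X$ from $P$ and resolving gives a quadric surface bundle $\pi\colon \Bl_P X\to\PP^2$ whose general fibre is the quadric residual to $P$ in $X\cap\PP^3_t$, of class $h^2-P$. The strict transform of $V$ is therefore a multisection of degree
\begin{equation*}
V\cdot(h^2-P)=V\cdot h^2-V\cdot P=4-\gamma,
\end{equation*}
which equals $5,3,1$ precisely for $\gamma=-1,1,3$, the odd values in the range. An odd multisection yields a zero-cycle of odd degree on the generic fibre $Q_\eta$ over $K=\C(\PP^2)$, so by Springer's theorem $Q_\eta$ acquires a $K$-rational point; a quadric surface with a rational point is $K$-rational, hence $\Bl_P X$, and with it $X$, is rational over $\C$. (For $\gamma=3$ the multisection has degree one, an honest rational section, giving rationality at once; alternatively for these values one exhibits $P\cup V$ as a reducible one-apparent-double-point surface and concludes by the classical secant construction.) Since $4-\gamma$ is constant on each component, this applies to every smooth member, as claimed.
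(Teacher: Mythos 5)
Your setup, your range computation, your exclusion of $\gamma=5$, and your rationality argument all match the paper. In particular your vector $v=V-P-h^2$ with $v^2=2$, $v\cdot h^2=0$ is exactly the paper's short root $(1,1,-1)$ (up to sign), and the paper likewise kills $\gamma=5$ by citing the criterion that a norm-$2$ vector forces the locus to be empty (the period point would land in the excluded $\mathcal C_2\cup\mathcal C_6$ locus). For $\gamma\in\{-1,1\}$ the paper uses precisely your odd-multisection argument with $V\cdot(h^2-P)=4-\gamma$; for $\gamma=3$ it opts for the reducible OADP surface $V\cup P$, but your degree-one section is equally valid. Your worry about ``hidden identifications'' between components is also dispatched immediately: the seven discriminants $20,37,48,53,52,45,32$ are pairwise distinct, so unlike the $\mathcal C_{12}$ case no two loci can coincide.

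The genuine gap is the irreducibility step, which you correctly identify as the heart of the matter and then do not carry out: you say the uniqueness of the saturation should be verified ``via Nikulin's criteria on the discriminant form'' and flag it as the main obstacle. This is where essentially all of the paper's work lies, and it is done not by Nikulin's embedding theory but by a finite elementary enumeration. For $\gamma=\pm1$ the discriminants $37$ and $53$ are square-free, so $N_\gamma$ has no proper finite-index overlattice and irreducibility is immediate. For $\gamma\in\{-2,0,2,3,4\}$ one writes any candidate overlattice $B\supset N_\gamma$ with $[B:N_\gamma]^2$ dividing $d(N_\gamma)$, takes a third basis vector $U=\tfrac1n(x'h^2+y'P+V)$ with $0\le x',y'<n$, and imposes integrality of $U\cdot h^2$, $U\cdot P$, $U\cdot U$; this leaves only a handful of Gram matrices in each case, and each is eliminated either because it contains a short root or because the primitive part $(h^2)^\perp\subset B$ fails to be even (as $H^4_{prim}$ must be). Without performing this check — or an equivalent Nikulin-style computation — the claim that each $\gamma$ gives a single irreducible component is unproven, so you should either carry out the enumeration above or supply the discriminant-form argument you allude to.
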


As the reader can see, we also draw information about the rationality of cubics inside certain irreducible components. This is based principally on two different techniques. The first is now quite standard, and it was introduced by Hassett in \cite{Ha}. In fact, in certain cases we are able to find an algebraic 2-cycle on a smooth cubic fourfold $X$ that is a rational odd-degree multi-section of the quadric surface fibration obtained by projection off the plane $P\subset X$. This implies the rationality of the fibration, and of the cubic fourfold. The second technique is somehow less commonly used in the realm of cubic fourfolds, it is rooted on \cite{CR} and has been partially used in \cite{BRS}. Basically, by using the fact that $X$ contains a couple of independent surfaces, we manage to construct inside $X$ one-apparent-double-point surface (OADP in the following) obtained as the union of other surfaces that intersect in some loci. It is a classical fact (see for example \cite{CMR}) that a cubic fourfold containing an OADP surface is rational.

In particular, inside the components of $\mathcal{C}_{20}\cap \mathcal{C}_{8}$, we find cubic fourfolds containing interesting reducible degenerations of the Veronese surface, given by a cubic scroll plus a plane intersecting the scroll along the directrix line. These examples hence lie in $\mathcal{C}_{20}\cap \mathcal{C}_{8}\cap \mathcal{C}_{12}$. On the other hand, inside the component of $\mathcal{C}_8\cap\mathcal{C}_{12}$ where the cubic scroll has intersection 0 with the plane, we find cubics containing a plane intersecting the scroll along a line of the ruling. These reducible surfaces are degenerations of quartic scrolls, and the cubics containing such objects are hence contained in $\mathcal{C}_8\cap \mathcal{C}_{12}\cap \mathcal{C}_{14}$.


\subsection{Description of the contents}

In Section \ref{Prel} we resume shortly the information about algebraic cycles and the moduli space of cubic fourfolds. Then we give some generalities on OADP varieties and on the intersection theory of 2-cycles on cubic fourfolds. In Section \ref{c12}, we describe the irreducible components of $\mathcal{C}_{12}\cap \mathcal{C}_8$ and explain the geometry of the generic cubic fourfolds in the components, in particular the rational ones. The same results, this time for $\mathcal{C}_{20}\cap \mathcal{C}_8$, are explained in Section \ref{c20}. Finally, in the Appendix, we give some explicit examples of cubic fourfolds in the irreducible components outlined in Sections \ref{c12} and \ref{c20}, and explain shortly how we obtained them via Macaulay2 computations.

\subsection{Acknowledgements:} We wish to thank F.Russo, A.Verra and G.Staglian\`o for some fruitful and pleasant conversations.

\section{Preliminary results}\label{Prel}

\subsection{Hodge theory for cubic fourfolds}\label{section 2}
Let X be a cubic fourfold over $\mathcal{C}$, that is a smooth hypersurface of degree 3 in $\PP^5$. We will denote by $\mathcal{C}$ the moduli space of smooth cubic fourfolds in $\P^5$. It is a twenty-dimensional quasi-projective variety. The Hodge diamond of X is as follows

\begin{center}

1\\0  \hspace{0.5cm} 0\\ 0 \hspace{0.5cm} 1 \hspace{0.5cm} 0\\ 0 \hspace{0.5cm} 0 \hspace{0.5cm} 0 \hspace{0.5cm} 0 \\ 0 \hspace{0.5cm} 1 \hspace{0.5cm} 21 \hspace{0.5cm} 1 \hspace{0.5cm} 0

\end{center}

Let us now concentrate on the middle cohomology of X, that contains most nontrivial Hodge theoretic geometric information.
 
 Let $L$ be the cohomology group $H^4(X,\Z)$, which is also called \textit{cohomology lattice}, and $L_{prim} = H^4_{prim}(X,\Z) := \langle h^2 \rangle^{\perp}$ the \textit{primitive cohomology lattice}, where $h\in H^2(X,\Z)$ is the \textit{hyperplane class} defined by the embedding $X \subset \mathbb{P}^5$. We recall that $L_{prim}$ is an even lattice (see \cite[\S 2]{Ha2}).
 
 More precisely, we will consider the lattice of integral middle Hodge classes of $X$:
 
 $$M(X)= H^{2,2}(X) \cap H^4(X,\Z)= H^{2}(X, \Omega^2_X) \cap H^4(X,\Z),$$ 
 
 which comes equipped with the integer valued intersection form $-,- $. The Hodge-Riemann bilinear relations imply that $M(X)$ is a positive definite lattice. For cubic fourfolds, the (integral) Hodge conjecture holds (see  \cite[Theorem 18]{Voi}), and algebraic, rational and homological equivalences coincide for cycles of codimension 2. The upshot is that the cycle map $CH^2(X) \to H^4(X,\mathbb{Z})$ is injective, where $CH^2(X)$ denotes the Chow group of codimension 2 cycles on $X$ up to rational equivalence (see \cite[\S 5]{CT} or \cite{BP20}). Notably, by the Hodge conjecture, the algebraic cycles are the $(2,2)$-part of $H^4(X,\mathbb{Z})$. The generic cubic fourfold $X\in \mathcal{C}$ has $\rk{M(X)}=\rk \langle h^2 \rangle=1$, and a cubic fourfold is said to be \it special \rm whenever $\rk{M(X)}>1$. This is equivalent to saying that $X$ contains a surface that is not homologous to the 2-dimensional linear section. Let us call $d(M(X)) \in \Z$ the discriminant of the lattice $M(X)$. This is just the determinant of the Gram matrix. 

A \textit{labelling} of a special cubic fourfold is a positive definite rank two saturated sublattice $K_d$, with $h^2 \in K_d \subseteq M(X)$. The discriminant $d$ is the determinant of the matrix of the intersection form on $K_d$.

The loci of special cubic fourfolds form a countably infinite union of irreducible divisors $\mathcal{C}_d$ in $\mathcal{C}$ with a labelling of discriminant $d$, where $d$ takes certain integer values. 
  
Only very few $\mathcal{C}_d$'s can be defined explicitly in terms of particular surfaces contained in $X$. For example, $\mathcal{C}_8$ is the locus of cubic fourfolds containing a plane, $\mathcal{C}_{12}$ is defined as the closure of the one containing a cubic scroll, $\mathcal{C}_{14}$ as the closure of the locus of cubics containing a quartic scroll (or equivalently a quintic del del Pezzo surface),  $\mathcal{C}_{20}$ is the closure of the locus of cubic fourfolds containing  a Veronese surface, and a few others.

\subsection{The divisor $\mathcal{C}_8$ and the quadric surface fibration}

As we have stated here above, the divisor $\mathcal{C}_8$ is the locus of cubics $X\subset \PP^5$ containing a plane $P$, hence they have a labelling $K_8$ of this shape:

\begin{equation}\label{eq:disc8}
\begin{tabular}{|c|c|c|}
\hline     &  $h^2$ & P \\
\hline  $h^2$ & 3 & 1   \\
\hline   P & 1 & 3 \\  
\hline
\end{tabular},
\end{equation}

The linear projection $\pi_P:X\dashrightarrow \PP^2$ with center $P$, is resolved by blowing up $P$

$$\begin{aligned}
\widetilde{\pi}_P: Bl_{P}(X) \longrightarrow \mathbb{P}^{2}
\end{aligned}$$

into a morphism, that has naturally a quadric bundle structure. Under mild conditions on the plane (see \cite[Prop. 1.2.5]{ABB}), the quadric bundle degenerates on a smooth sextic curve $D\subset \PP^2$.

\begin{prop}\cite[Prop. 2.3]{Ha}\label{multisection}
Let $q: \mathcal Q \to B$ be a quadric surface bundle over a rational projective variety. Let $Q$ denote the class of the generic fiber of $q$ inside $H^4(\mathcal{Q}, \mathbb Z)\cap H^{2,2}(\mathcal Q)$. Assume there is a class $T \in H^4(\mathcal{Q}, \mathbb Z)\cap H^{2,2}(\mathcal Q)$ which has odd intersection with $Q$, then $\mathcal{Q}$ is rational over $\mathbb{C}$.
\end{prop}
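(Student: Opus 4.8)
The plan is to pass to the generic fiber and turn the statement into a question about quadratic forms over the function field of the base. Write $K=\mathbb{C}(B)$; since $B$ is rational, $K$ is purely transcendental over $\mathbb{C}$, say $K\cong\mathbb{C}(t_1,\dots,t_n)$ with $n=\dim B$. The generic fiber $\mathcal{Q}_\eta$ is a \emph{smooth} quadric surface over $K$ (smoothness holds at the generic point, as the degeneration locus of $q$ is a proper closed subset of $B$), and its function field $K(\mathcal{Q}_\eta)$ is exactly $\mathbb{C}(\mathcal{Q})$. Hence, if I can show that $\mathcal{Q}_\eta$ is $K$-rational, then $K(\mathcal{Q}_\eta)\cong K(u,v)$ and $\mathbb{C}(\mathcal{Q})\cong\mathbb{C}(t_1,\dots,t_n,u,v)$ is purely transcendental over $\mathbb{C}$, giving the rationality of $\mathcal{Q}$. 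So the whole statement reduces to the $K$-rationality of the single quadric surface $\mathcal{Q}_\eta$.

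Next I would recall the classical criterion that a smooth quadric surface over a field $K$ is $K$-rational as soon as it carries one $K$-rational point: stereographic projection from that point defines a birational map to $\mathbb{P}^2_K$ over $K$. Thus it suffices to produce a single $K$-point on $\mathcal{Q}_\eta$, and this is where the hypothesis on $T$ enters.

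To manufacture such a point I would represent the Hodge class $T$ by an honest algebraic cycle $\mathcal{T}$ on $\mathcal{Q}$, which is legitimate in our setting since the relevant $(2,2)$-classes are algebraic (by the integral Hodge conjecture in the cases at hand), and restrict $\mathcal{T}$ to the generic fiber. Because $\mathcal{T}$ has codimension two and $\mathcal{Q}_\eta$ is a surface, the restriction $\mathcal{T}_\eta$ is a zero-cycle on $\mathcal{Q}_\eta$ whose degree over $K$ equals the intersection number $T\cdot Q$, which is odd by assumption. An odd integer cannot be written as a $\mathbb{Z}$-combination of even local degrees, so $\mathcal{T}_\eta$ must involve a closed point $x\in\mathcal{Q}_\eta$ whose residue field $L=\kappa(x)$ has odd degree $[L:K]$; over $L$ the quadric acquires a point.

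Finally I would invoke Springer's theorem on quadratic forms: a form that becomes isotropic over a finite extension of odd degree is already isotropic over the base field. Applied to the defining form of $\mathcal{Q}_\eta$ over $K$, the odd-degree point $x$ forces isotropy over $K$, i.e.\ a genuine $K$-rational point of $\mathcal{Q}_\eta$; combined with the projection criterion this yields the $K$-rationality of the generic fiber, hence the rationality of $\mathcal{Q}$ by the first step. The delicate points, rather than genuine obstacles, are the identification of the cohomological number $T\cdot Q$ with the degree of the zero-cycle $\mathcal{T}_\eta$ on the generic fiber (resting on the algebraicity of $T$ and the compatibility of the cycle class map with restriction to the fiber), and the clean application of Springer's theorem, which is precisely the place where the parity of $T\cdot Q$ is used.
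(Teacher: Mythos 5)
Your argument is correct and is essentially the standard proof of this result: the paper itself only cites \cite[Prop.~2.3]{Ha} without reproving it, and Hassett's proof is exactly your chain (restrict the algebraic representative of $T$ to the smooth generic fiber to get a zero-cycle of odd degree over $K=\mathbb{C}(B)$, apply Springer's theorem to obtain a $K$-point, conclude $K$-rationality of the quadric surface by stereographic projection, and then rationality of $\mathcal{Q}$ from the rationality of $B$). No discrepancies to report.
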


In our situation, this is equivalent to saying that there is an odd degree multi-section of the quadric bundle $\widetilde{\pi}_P$, and we will use several times Prop. \ref{multisection} in the rest of the paper to check that certain classes of cubic fourfolds are rational.

\subsection{Generalized OADP varieties}

Let us recall the following important definition.
\begin{defin}
Let $X$ be an equidimensional reduced scheme in $\p^{2n+1}$ of dimension $n$. The scheme $X$ is called {\it a (generalized) variety with one apparent double point} if through a general point of $\p^{2n+1}$
there passes a unique secant line to $X$, that is a unique line cutting $X$ scheme theoretically in a reduced length two scheme.
\end{defin}

 The name $OADP$ variety is usually reserved for the irreducible reduced scheme satisfying the previous condition. This somehow bizarre name comes from the fact that the projection of $X$ from a general point into $\p^{2n}$ acquires a unique singular point, which is {\it double}, which means that its tangent cone is a reducible quadric of rank 2.
\medskip

We call \it abstract secant variety \rm $S_X$ of an irreducible variety $X\subset\p^{2n+1}$ the restriction of the universal family of the Grassmannian $\mathbb G(1,2n+1)$ to the closure of the image of the rational map which associates to a couple $(p,q)\in X\times X$ (with $p\neq q$) the line $\overline{pq}$ spanned by them. When $X$ is an OADP variety, the tautological morphism $p:S_X\to\p^{2n+1}$ is birational. This means that, by Zariski Main Theorem, the locus of points of $\PP^{2n+1}$ through which there passes
more than one secant line has codimension at least two inside $\PP^{2n+1}$. The same property holds true for reducible OADP varieties.
\medskip

\begin{cor}\cite[Cor 2.7]{BRS} Let $X\subset\p^N$ be  a non degenerate reduced algebraic set scheme theoretically defined by quadratic forms such that 
their Koszul syzygies are generated by linear syzygies. If through a general point of $\p^N$ there passes a finite number of secant lines
to $X$, then $X\subset\p^N$ is a generalized OADP variety.

In particular a small algebraic set $X\subset\p^N$ such that through a general point of $\p^N$ there passes
a finite number of secant lines to $X$ is a generalized  OADP variety.
\end{cor}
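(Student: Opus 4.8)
The plan is to study the rational map defined by the quadrics cutting out $X$ and to recover the secant lines as its contracted lines. Let $Q_0,\dots,Q_r$ be a basis of the degree-two part of the (saturated) ideal of $X$, so that $X$ is their common scheme-theoretic zero locus, and set $\phi=(Q_0:\cdots:Q_r)\colon\p^N\dashrightarrow\p^r$. First I would observe that any line $L\not\subset X$ meeting $X$ in a length-two subscheme --- a genuine secant, or a tangent line at a smooth point --- is contracted by $\phi$: each $Q_i|_L$ is a binary quadratic form vanishing on that length-two scheme, so every nonzero $Q_i|_L$ is the same quadratic form up to scalar and $\phi(L)$ is a single point. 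Hence every secant line through a point $p$ lies in the fibre $\overline{\phi^{-1}(\phi(p))}$, and the entry locus $\Sigma_p$ (the closure of the $x\in X$ with $\overline{px}$ secant or tangent) equals $X$ intersected with that fibre. Since the general point of $\p^N$ lies on at least one secant, the finiteness hypothesis gives $\Sec(X)=\p^N$, and the standard count $\dim S_X=2\dim X+1$ forces $N=2\dim X+1$; non-degeneracy ensures this is the honest ambient dimension, so the numerical setting of a generalized OADP variety is in place.

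The crux is the following claim, where the syzygy hypothesis enters essentially: \textbf{the general fibre $F$ of $\phi$ is a linear subspace of $\p^N$.} This is exactly where one must use that the Koszul syzygies $Q_i\mathbf e_j-Q_j\mathbf e_i$ are generated by linear syzygies. The route I would take is to pass to the graph $\Gamma_\phi\subset\p^N\times\p^r$: a linear syzygy $\sum_i\ell_iQ_i=0$ produces a bidegree-$(1,1)$ form $\sum_i\ell_i(x)\,y_i$ vanishing on $\Gamma_\phi$, while a fibre of $\phi$ is cut out of $\p^N$ by the quadrics $w_jQ_i-w_iQ_j$ with $w=\phi(p)$ fixed, whose mutual relations are governed precisely by the Koszul syzygies. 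The hypothesis that these Koszul syzygies come from linear ones forces the fibre equations to reduce, modulo linear forms, so that a general fibre is linear; this is the content of the theory of quadratic entry locus varieties (Vermeire, Sidman--Vermeire, Russo--Simis). \textbf{This linearity is the main obstacle:} everything else is elementary projective geometry and dimension counting, whereas here one genuinely has to convert a statement about generators of a syzygy module into a statement about the scheme structure of the fibres.

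Granting the claim, I would finish as follows. On the linear fibre $F\cong\p^k$ through a general $p$ all the $Q_i$ restrict to multiples of one quadratic form $q$, so $\Sigma_p=X\cap F=\{q=0\}$ is a quadric hypersurface of $F$, and it is a proper quadric because a general fibre is not contained in $X$. If $k\ge 2$, then every one of the $\p^{k-1}$ lines through $p$ in $F$ meets $\{q=0\}$ in a length-two scheme and is therefore a secant line, contradicting the finiteness of secants through $p$; hence $k=1$. Thus the fibre through a general point is a single line, it is the unique secant line through $p$, and $\deg\bigl(p\colon S_X\to\p^N\bigr)=1$. Finally $F\cap X=\{q=0\}\subset\p^1$ is reduced for general $p$: otherwise every such fibre line would be tangent to $X$, forcing $\p^N=\operatorname{Tan}(X)$, which is impossible since the tangent lines sweep out a locus of dimension at most $2\dim X<N$. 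Therefore through a general point of $\p^N$ there passes a unique line meeting $X$ in a reduced length-two scheme, which is exactly the definition of a generalized OADP variety. The ``in particular'' clause is then immediate, a small algebraic set being by definition one satisfying the quadratic and Koszul--linear syzygy hypotheses.
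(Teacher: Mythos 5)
The paper offers no proof of this statement: it is imported verbatim as \cite[Cor.~2.7]{BRS}, so there is no in-paper argument to compare yours against. Judged on its own terms, your reconstruction is essentially the derivation used in the cited source, and it is correct modulo the one input you explicitly flag: the linearity of the general (positive-dimensional) fibre of the map $\phi$ defined by the quadrics. That linearity is a genuine theorem of Vermeire type (the version needed here is the one recorded as a separate theorem in \cite{BRS} before the corollary), and your proof does not establish it but correctly isolates it as the only non-elementary step; everything downstream --- secant lines are contracted, $X\cap F$ is a quadric hypersurface of the linear fibre $F$, $\dim F\geq 2$ would produce infinitely many secants through $p$, hence $F$ is the unique secant line --- is sound and is exactly how the corollary follows from that theorem. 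Two small points. First, the hypothesis must be read as ``a finite \emph{positive} number of secant lines,'' i.e.\ $\Sec(X)=\p^N$, as you note; otherwise a rational normal curve in $\p^4$ would be a counterexample. Second, your closing reducedness argument via $\operatorname{Tan}(X)$ is shakier than necessary for singular $X$ (Zariski tangent spaces at singular points can be large); it is cleaner to observe that the general point of $\p^N$ already lies on an honest reduced secant by the definition of $S_X$ as the closure of the family of lines through \emph{distinct} pairs of points, and that this secant must coincide with the unique contracted line $F$. Also, ``small'' algebraic sets satisfy the quadratic-with-linear-Koszul-syzygies hypothesis by the Eisenbud--Green--Hulek--Popescu results on $2$-regularity, not by definition, so the ``in particular'' clause needs that citation rather than being immediate.
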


\subsection{Intersection theory on a cubic fourfold}

Let $X\subset\p^5$ be a smooth cubic 4-fold. Let $S\subset X$ be a smooth surface
and let $P\subset X$ be a plane s.t. the scheme theoretic intersection $S\cap P$ is a smooth curve $C$ of genus $g$ and degree $d$. Let us now compute the intersection number $S\cdot P\in\mathbb Z$ inside $X$ under the previous hypothesis. By \cite[Prop. 9.1.1, third formula]{Fu} we have
\begin{equation}\label{eq:Fulton}
S\cdot P=c_1(N_{S/X|C})\cdot c_1(T_{P|C})^{-1}\cdot c_1(T_C)\cap C.
\end{equation}

Let us denote by $K_Y$ the canonical class
of an arbitrary smooth projective variety $Y$. It is straightforward to see that $c_1(T_C)\cap C=2-2g(C)=-d(d-3)$ and $c_1(T_{P|C})^{-1}\cap C=K_P\cdot C=-3d$. By using the short exact sequence

$$0\to T_{S|C}\to T_{X|C}\to N_{S/X|C}\to 0,$$

we get the following equalities:

$$c_1(N_{S/X|C})\cap C=c_1(T_{X|C})\cap C+c_1(T_{S|C})^{-1}\cap C=-K_X\cdot C+K_S\cdot C=3d+K_S\cdot C.$$

By combining the previous calculation with \eqref{eq:Fulton} we deduce

\begin{equation}\label{eq:excess}
S\cdot P=K_S\cdot C-d(d-3)=K_S\cdot C+2-2g(C)=-\deg(N_{C/S}).
\end{equation}

More generally, always from \cite[Prop. 9.1.1]{Fu} (see \cite[Prop. 2.8]{BRS} for this particular form of the statement).

\begin{prop}\label{multiplicity}
Let $X \subset \PP^5$ be a smooth
cubic hypersurface and let $S_1, S_2 \subset X$ be two smooth surfaces such that the scheme theoretic
intersection $S_1 \cap S_2$ contains a smooth curve $C$ of degree $d$ and genus $g$. Then:

\begin{equation}
    mult_C(S_1,S_2) = 3d +K_{S_1}\cdot C + K_{S_2}\cdot C +2 -2g,
\end{equation}

where $K_{S_i}$ denotes the canonical class of $S_i$ and $mult_C(S_1 \cdot S_2)$ the multiplicity of
intersection of $S_1$ and $S_2$ along $C$.
\end{prop}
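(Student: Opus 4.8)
The plan is to obtain the formula as a localized version of the excess intersection formula of \cite[Prop. 9.1.1]{Fu}, in exactly the spirit of the computation leading to \eqref{eq:excess}, but now keeping track only of the contribution supported on the component $C$. First I would regard $S_1$ as regularly embedded in the smooth fourfold $X$, so that $N_{S_1/X}$ is a rank two vector bundle, and restrict the inclusion $S_2\hookrightarrow X$. Since $\dim S_1+\dim S_2-\dim X=0$ while $C$ has dimension one, the intersection is excessive along $C$, and Fulton's formula expresses the part of $S_1\cdot S_2$ supported on $C$ as the degree of the zero-dimensional component
\begin{equation*}
\mult_C(S_1,S_2)=\deg\bigl\{c(N_{S_1/X}|_C)\cap s(C,S_2)\bigr\}_0,
\end{equation*}
where $s(C,S_2)$ denotes the Segre class of $C$ inside $S_2$.

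Next I would compute the two factors. Because $C$ is a smooth curve on the smooth surface $S_2$, it is a Cartier divisor, so $s(C,S_2)=c(N_{C/S_2})^{-1}\cap[C]$, whose one-dimensional part is $[C]$ and whose zero-dimensional part is $-\deg(N_{C/S_2})$. Since $N_{S_1/X}$ has rank two, on the curve $C$ one has $c(N_{S_1/X}|_C)=1+c_1(N_{S_1/X})|_C$, and extracting the zero-dimensional term of the cap product yields
\begin{equation*}
\mult_C(S_1,S_2)=c_1(N_{S_1/X})\cdot C-\deg(N_{C/S_2}).
\end{equation*}

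It then remains to match these two numbers with the terms in the statement. For the first, the normal bundle sequence $0\to T_{S_1|C}\to T_{X|C}\to N_{S_1/X|C}\to 0$ together with $K_X=-3h$ gives $c_1(N_{S_1/X})\cdot C=-K_X\cdot C+K_{S_1}\cdot C=3d+K_{S_1}\cdot C$, which is precisely the computation already carried out before \eqref{eq:excess}. For the second, adjunction for the divisor $C\subset S_2$ gives $2g-2=(K_{S_2}+C)\cdot C=K_{S_2}\cdot C+\deg(N_{C/S_2})$, hence $\deg(N_{C/S_2})=2g-2-K_{S_2}\cdot C$. Substituting both into the previous display produces
\begin{equation*}
\mult_C(S_1,S_2)=3d+K_{S_1}\cdot C+K_{S_2}\cdot C+2-2g,
\end{equation*}
as claimed. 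As a built-in consistency check, the resulting formula is symmetric in $S_1$ and $S_2$, as an intersection multiplicity must be, even though the starting expression broke this symmetry by distinguishing $N_{S_1/X}$ from $s(C,S_2)$; moreover, specializing $S_2$ to a plane $P$, where $K_P\cdot C=-3d$, recovers \eqref{eq:excess}.

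The step that I expect to require the most care is the bookkeeping in Fulton's formula: one must ensure that the contribution is genuinely localized at $C$, so that other components of $S_1\cap S_2$ do not interfere, and that the Segre class of $C\subset S_2$ is handled as that of a Cartier divisor rather than of a complete intersection. Once the zero-dimensional part is correctly isolated, the remaining identities are just the adjunction and normal-sequence computations already used for \eqref{eq:excess}, so no new difficulty arises there.
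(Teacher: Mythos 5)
Your argument is correct and is essentially the paper's own: the authors derive the plane case \eqref{eq:excess} by exactly this application of \cite[Prop. 9.1.1]{Fu} (expanding $c(N_{S_1/X}|_C)$ against the inverse normal/Segre class of $C$ in the second surface and then using the normal bundle sequence with $K_X=-3h$), and for the general statement they simply cite \cite[Prop. 2.8]{BRS}, whose proof is the computation you give. Your use of adjunction on $S_2$ to evaluate $\deg(N_{C/S_2})$ is equivalent to the paper's expansion of $c(T_C)\cdot c(T_{P|C})^{-1}$, and your consistency checks (symmetry in $S_1,S_2$ and recovery of \eqref{eq:excess}) confirm the match.
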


\newcommand{\GG}{\mathbb{G}}

\section{Cubic hypersurfaces in $\mathcal C_{12}$}\label{c12}

One can define the divisor $\mathcal C_{12}\subset\mathcal C$ in two possible ways. The first one is as the locus of smooth cubic hypersurfaces having a marking of discriminant 12, the second is as the closure of the locus of cubic hypersurfaces containing
a smooth cubic rational normal scroll $S$. 
More precisely, we say that a cubic fourfold has a marking of discriminant 12 whenever it contains a surface, whose class $S$ verifies $S\cdot S=7$ and $S\cdot h^2=3$. A smooth cubic rational normal scroll has some possible degenerations. First, it can degenerate to a cone over a twisted cubic; otherwise, it could deform to the union of a quadric surface $Q$ and a plane $P$ such that $Q\cap P=\langle Q \rangle \cap P$ is a line $L$ and $\langle Q \rangle$ is the linear envelope of the quadric surface. The second case can further degenerate to the union of three planes if $Q$ degenerates further to the union of two planes. These three degenerations are small varieties, and are contained
in smooth cubic hypersurfaces. There are other degenerations obtained by projection but the resulting
schemes are not contained in any smooth cubic hypersurface in $\PP^5$. Hence every element
$X\in \mathcal C_{12}$ contains either a cubic rational normal scroll or a reducible surface $Q\cup P$ as above, where $Q$ is a quadric
surface, irreducible or not. If $Q\cup P\subset X$, then $X\in \mathcal
C_{12}\cap\mathcal C_8$ and $P\cdot (Q+P)=3$. Furthermore, we observe that if $S=P\cup Q$, letting $P_1+Q=h^2$, we see that there exists a plane $P_1\in<h^2, Q, P>$
such that $P_1\cdot P=1$ and $P_1\cdot (Q+P)=-1$. 
\medskip

\begin{thm}\label{cap812} There are five irreducible components of $\mathcal C_8\cap \mathcal C_{12}$ 
	indexed by  the value $P\cdot S = \epsilon \in\{-1, 0, 1, 2, 3\}$, where $P$ is a plane and $S$ the class
	of a cubic rational normal scroll. For $\epsilon=0$ or $2$ every element in the corresponding irreducible component
	is rational. 
\end{thm}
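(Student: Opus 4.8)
The plan is to separate the problem into a lattice-theoretic classification of the components and a rationality argument via the quadric fibration. For the first part, note that any $X\in\mathcal C_8\cap\mathcal C_{12}$ carries inside $M(X)$ the rank-three sublattice generated by $h^2$, the plane class $P$ and a scroll class $S$; with respect to the basis $(h^2,P,S)$ its Gram matrix is
\[
\begin{pmatrix} 3 & 1 & 3 \\ 1 & 3 & \epsilon \\ 3 & \epsilon & 7 \end{pmatrix},\qquad \epsilon:=P\cdot S,
\]
every entry except $\epsilon$ being forced by the defining markings $K_8$ and $K_{12}$. Because $M(X)$ is positive definite by the Hodge--Riemann relations, I would impose positive-definiteness of this matrix; the first two leading minors are $3$ and $8$, so the condition reduces to positivity of the determinant $29-3\epsilon^2+6\epsilon$, which holds exactly for $\epsilon\in\{-2,-1,0,1,2,3,4\}$. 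This already produces a finite list and is the routine half of the argument.

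The core of the proof is to refine this list to $\{-1,0,1,2,3\}$ and to realize each value. For this I would analyse the scheme $S\cap P$ inside $X$ with the intersection theory of Section~\ref{Prel}. If it is a curve $C$, then $C$ is at once a plane curve lying in $P$ and a curve on the scroll $S\cong\mathbb F_1$; for $C$ smooth, formula~\eqref{eq:excess} gives $\epsilon=-\deg(N_{C/S})=-C^2$, and matching the classes $aC_0+bf$ on $\mathbb F_1$ against the degree and genus constraints for a plane curve leaves only $C\in\{f,C_0,C_0+f\}$, hence $\epsilon\in\{0,1,-1\}$. If instead $S\cap P$ is zero-dimensional, then $\epsilon$ is its length, which is non-negative and at most $\deg S=3$ because $S$ is nondegenerate in a $\p^4$ and a linear plane meets it in at most three points; this gives $\epsilon\in\{0,1,2,3\}$. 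Together these exclude $\epsilon=-2$ (negative length, and no class with $C^2=2$ exists on $\mathbb F_1$) and $\epsilon=4$ (no smooth irreducible class with $C^2=-4$, and at most three points). The step I expect to be most delicate is precisely the exclusion of the extremal values---especially $\epsilon=4$, where one must genuinely use the nondegeneracy of the scroll and rule out improper or non-reduced intersections---together with the nonemptiness and irreducibility of each surviving locus: nonemptiness I would settle by the explicit equations of the Appendix, and irreducibility by the standard fact that the locus of cubics admitting a fixed rank-three marking is an irreducible Noether--Lefschetz locus (one still has to check that the relevant primitive embedding is essentially unique). This yields the five components.

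For the rationality assertion when $\epsilon\in\{0,2\}$ I would apply Proposition~\ref{multisection} to the quadric surface fibration $\widetilde\pi_P\colon\Bl_P(X)\to\p^2$. A general $\p^3$ through $P$ cuts $X$ along $P$ together with a residual quadric surface, and since the class of a codimension-two linear section is $h^2$, the fiber class is $Q=h^2-P$. The scroll therefore meets a general fiber in $S\cdot Q=S\cdot(h^2-P)=3-\epsilon$ points, so $S$ is an odd-degree multisection exactly when $\epsilon$ is even, i.e. for $\epsilon\in\{0,2\}$ within our range; Proposition~\ref{multisection} then gives rationality of every cubic in those two components. The only technical point to verify is that passing to $\Bl_P(X)$ does not alter the parity of this number when $S\cap P$ is positive-dimensional---as for $\epsilon=0$, where $S$ meets $P$ along a line of the ruling---which is a direct check on the exceptional contributions.
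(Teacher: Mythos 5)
Your lattice set-up and determinant computation are exactly the paper's, and your rationality argument is correct and in fact slightly cleaner: computing $S\cdot(h^2-P)=3-\epsilon$ and invoking Proposition \ref{multisection} handles $\epsilon=0$ and $\epsilon=2$ uniformly (the projection formula disposes of your worry about the blow-up, since the fiber class of $\widetilde\pi_P$ pushes forward to $h^2-P$), whereas the paper treats $\epsilon=2$ separately by showing $S\cup P$ is a reducible OADP surface. The classification half, however, has two genuine gaps. First, you cannot exclude $\epsilon=-2$ and $\epsilon=4$ by analysing the scheme $S\cap P$ for a smooth scroll and a plane. The divisor $\mathcal C_{12}$ is the \emph{closure} of the locus of cubics containing a smooth scroll, and a class $S$ with $S^2=7$, $S\cdot h^2=3$ in $M(X)$ may only be represented by a degeneration (a cone over a twisted cubic, or a quadric plus a plane), or need not be represented by any surface in the position you assume; so showing that a smooth scroll and a plane cannot meet with intersection number $-2$ or $4$ does not show that the lattices $M_{-2}$ and $M_4$ fail to embed in $M(X)$ for a smooth $X$. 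The paper's exclusion is purely lattice-theoretic: by Lemma 2.4 of Yang--Yu, the locus attached to $M_\eta$ is empty if and only if $M_\eta$ contains a short root (a vector of norm $2$), and one exhibits $(-2,1,1)$ for $\eta=-2$ and $(0,-1,1)$ for $\eta=4$; the same criterion simultaneously gives non-emptiness for the five surviving values without appeal to the Appendix.

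Second, the irreducibility step you defer is the bulk of the proof, not a formality. Granting that the locus of cubics with a fixed saturated rank-three sublattice is irreducible, one must still show that no proper overlattice $B\supset M_\eta$ can occur as the saturation inside $M(X)$, for otherwise the locus indexed by $\epsilon$ would split. For $\epsilon=0,2$ the discriminant $29$ is square-free and this is automatic, but for $\epsilon=-1,1,3$ (discriminants $20$, $32$, $20$) the paper enumerates the finitely many candidate overlattices via $|d(M_\eta)|=|d(B)|\cdot[B:M_\eta]^2$ and integrality of the Gram matrix, and kills each candidate either by exhibiting a short root or by showing that its primitive part $(h^2)^\perp$ fails to be even (as it must be, being a sublattice of the even lattice $H^4_{prim}(X,\mathbb Z)$). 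Without this enumeration the count of five components is not established.
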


\begin{proof}

First of all we observe that the generic cubic $X$ contained in the intersection $\mathcal C_8\cap\mathcal C_{12}$ has a sublattice $\langle h^2, P, S \rangle \subset M(X)$. The intersection lattice of $\langle h^2, P, S \rangle$ is as follows,

\begin{equation}\label{eq:disc812}
\begin{tabular}{|c|c|c|c|}
\hline     &  $h^2$ & $P$ & $S$\\
\hline  $h^2$ & 3 & 1  & 3 \\
\hline   $P$ & 1 & 3 & $\eta$ \\
\hline  $S$ & 3 & $\eta$ & 7 \\
\hline
\end{tabular},
\end{equation}
for some $\eta=P\cdot S \in \mathbb{Z}$. Let us denote by $M_{\eta}$ the rank 3 sublattice.\\
Note that $M(X)$ is definite positive by the Hodge-Riemann bilinear relations. Hence, by Sylvester criterion, $M_{\eta}$ must have positive determinant. The only integer values of $\eta$ for which its determinant $-3\eta^2+6\eta + 29$ is positive are $\pm 2, \pm 1, 0,3,4$. We will now show that there are no such cubics for $\eta = -2, 4$. By \cite[Lemma 2.4]{YYu}, the existence of a short root (\it i.e. \rm an integer vector with norm 2) is equivalent to the emptiness of the component. If $\eta=-2$, we observe that 
$\left(\begin{array}{c} -2 \\ 1 \\ 1\
\end{array}\right)$ is a short root, and if $\eta=4$ then $\left(\begin{array}{c} 0 \\ -1 \\ 1\end{array}\right)$ has also norm 2.

\smallskip

An easy explicit calculation shows that no other component admits short roots, hence by \cite[Lemma 2.4]{YYu}, for all the other values of $\eta$ the components are non-empty. Let us denote by $\mathcal{C}_{M_{\eta}}$ the locus of cubic fourfolds such that there is a primitive embedding of $M_{\eta} \subset M(X)$ preserving $h^2$.

In order to check irreducibility we need to check that there are no overlattices $B$ of $M_\eta$, that embed primitively in 
$H^4(X,\mathbb{Z})$. If there exists an embedding $M_\eta \hookrightarrow B$, then $|d(M_\eta)|=|d(B)|\cdot[B:M_\eta]^2$. Moreover, depending on the value of $\eta$, we have

\begin{equation}
\begin{tabular}{|c|c|}
\hline  $\epsilon$   &  $d(M_\eta)$\\
\hline -1& 20\\
\hline    0 & 29\\
\hline  1&  32\\
\hline 2 &  29\\
\hline   3  &20\\
\hline
\end{tabular}.
\end{equation}

We observe straight away that 29 is square free, hence $M_2$ and $M_0$ cannot have overlattices. For the other components, we will show that any possible overlattice has short roots, so that there are no smooth cubic fourfolds with such a lattice.

Consider $h^2$ and $P$ as a part of a basis of the overlattice
$B$ and let $U$ be a vector that completes this to a basis such that $U = xh^2+yP +zS$,
with $x, y, z$ rational coefficients. Consider now the natural embedding of $M_\eta$ in $B$. This can be written explicitly as follows:

$$\left(\begin{array}{ccc}
1  & 0 & x  \\
0 & 1 & y \\
0 & 0 & Z \\
\end{array}\right)^{-1} = \left(
\begin{array}{ccc}
    1 & 0 & \frac{-x}{z} \\
    0 & 1 & \frac{-y}{z} \\
    0 & 0 & \frac{1}{z}
\end{array}\right) \in \mathcal{M}_{3,3}(\mathbb Z).$$

If we take $z = \frac{1}{n}$, for some $n \in \mathbb Z$ and $x' = nx$, $y' = yn \in \mathbb{Z}$, then we can write $U = \frac{1}{n}(x'h^2 +
y'P + S)$. Moreover, by adding appropriately multiples of $h^2$ and of $P$, we can assume that $0 \leq x', y' < n$.
An explicit computation of the intersections gives the the following:

\begin{align*}
U.h^2 & =  \frac{1}{n}
(3x' + y' + 3) = a, \\
U.P & =  \frac{1}{n} (x' + 3y' + \eta ) = b, \\
U.U & =  \frac{1}{n^2} (3x'^2 + 3y'^2 + 6x' + 2\eta y' + 2x'y' + 7) = c. 
\end{align*}

With this notation, the Gram matrix of $B$ is 

$$\left(
\begin{array}{ccc}
    3 & 1 & a \\
    1 & 3 & b \\
    a & b & c
\end{array}\right).$$

Since the discriminant of $M_{-1}$ is 20,   $[B:M_{-1}]$ can only be 4 and $n=2$. The only cases where the Gram matrix of B has integer entries are:
 
 \begin{itemize}
     \item $n=2,\ x'=1,\ y'=0,$ which gives $a=3,\ b=0,\ c=4;$
     
     \item $n=2,\ x'=0,\ y'=1,$ which gives $a=2,\ b=1,\ c=2.$
     
 \end{itemize}
 In the first of the two cases, one computes explicitly that the Gram matrix of B has $\left(\begin{array}{c} -1 \\ 1 \\ 1 
\end{array}\right)$ as a short root, hence it is not a lattice of a cubic fourfold. In the second case, the Gram matrix has a short root $\left(\begin{array}{c} 0 \\ 0 \\ 1 
\end{array}\right)$, hence the component $\mathcal{C}_{M_{-1}}$ is irreducible.
 
\medskip Let us now consider $M_1$. Since the discriminant of $M_1$ is 32, then $[B:M_1]^2$ is either 4 or 16, and $n=2$ or $4$. Hence, the only cases where the matrix above has integer entries are the following two:

\begin{itemize}
    \item $n=2,\ x'=1,\ y'=0,$ which gives $a=3,\ b=1,\ c=4;$\\
    \item $n=2,\ x'=0,\ y'=1,$ which gives $a=2,\ b=2
    ,\ c=3.$
\end{itemize}

In the first case the corresponding Gram matrix has no short roots. On the other hand one can check easily that the two vectors $(-1,3,0)$ and $(0,3,-1)$ make up a basis for the rank 2, primitive lattice $B_{prim}:= (h^{2})^{\perp}\subset H_{prim}^4(X,\mathbb{Z})$. The Gram matrix of $B_{prim}$ is $\left(\begin{array}{cc} 24 &  24 \\ 24 & 25 \\ 
\end{array}\right)$, hence the lattice is not even. Since $H_{prim}^{4}(X,\mathbb{Z})$ is even, $B$ cannot be an overlattice of $M_1$. 

\smallskip

In the second case, the Gram matrix has a short root, that is $\left(\begin{array}{c} -1 \\ 0 \\ 1 
\end{array}\right)$. The value $n=4$ does not give a integer valued matrix. This means that $\mathcal C_{M_1}$ is irreducible as well.

\smallskip

For the last case that is left, since the discriminant of $M_{3}$ is 20, the index $[B:M_{3}]$ can only be 4 and $n=2$. The only cases where the Gram matrix of B has integer entries are:
 
 \begin{itemize}
     \item $n=2,\ x'=0,\ y'=1,$ which gives $a=2,\ b=3,\ c=4;$
     \item $n=2,\ x'=1,\ y'=0,$ which gives $a=3,\ b=2,\ c=4;$
     
 \end{itemize}
In both cases, the Gram matrix of B has a short root $\left(\begin{array}{c} -1 \\ -1 \\ 1 
\end{array}\right)$. Hence $\mathcal{C}_{M_3}$ is irreducible.

\medskip

\it Rationality of $X\in \mathcal{C}_{M_0}:$ \rm It is not hard to see that the general element of the family where $ \#(P\cap S)= 0$ is also rational. In fact we have  $\rk(A(X))=3$ and let us set $Q+P=h^2$. Since $S\cap P=\emptyset$, we have  $S\cdot Q=3$ and
we deduce that the rational quadric fibration over $\PP^2$ determined by projection from $P$ has a rational section, hence it is rational by Prop. \ref{multisection}. Since the general cubic in this component is rational, by \cite{KT} this holds for all cubics with $\#(P\cap S)= 0$.

\medskip

\it Rationality of $X\in \mathcal{C}_{M_2}:$ \rm Every cubic fourfold $X\in \mathcal C_{M_2}$ is rational. Let $p\in \PP^5$ be a general point, and the projections of $S$ and $P$ off $p$ intersect in 3 points, but 2 of them come from $S\cap P$ in $\PP^5$. This means that there is only one secant line through $p$, that is $S\cup P\subset \PP^5$ is a reducible OADP and $X$ is then rational. We also observe that $P\not\subset<S>=\PP^4$ since $P\cdot S=2$ implies that $P$ and $S$ cut only at two points.

\end{proof} 
 
\begin{lema}\label{12coincide}
The irreducible components $\mathcal{C}_{M_{-1}}$ and $\mathcal{C}_{M_{3}}$ coincide, the same holds for $\mathcal{C}_{M_{0}}$ and $\mathcal{C}_{M_{2}}$.
\end{lema}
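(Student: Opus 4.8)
The plan is to show that each locus $\mathcal C_{M_\eta}$ depends only on the isometry class of the pair $(M_\eta,h^2)$, i.e. on $M_\eta$ together with its distinguished vector $h^2$, and then to exhibit an explicit integral, $h^2$-fixing change of basis identifying $M_3$ with $M_{-1}$ and $M_2$ with $M_0$. Throughout, write $G_\eta$ for the Gram matrix of $M_\eta$ in the basis $(h^2,P,S)$ of \eqref{eq:disc812}.

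First I would record the reduction. Since $\mathcal C_{M_\eta}$ is by definition the locus of $X$ admitting a primitive embedding $M_\eta\hookrightarrow M(X)$ with $h^2\mapsto h^2$, any unimodular change of basis of $M_\eta$ that fixes $h^2$ produces literally the same sublattice of $M(X)$, and in particular cannot affect primitivity. Hence it suffices to find, inside $M_\eta$, a $\mathbb Z$-basis containing $h^2$ whose Gram matrix is $G_{2-\eta}$.

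The key step is the substitution that keeps $h^2$ and $P$ and replaces the scroll class by $S':=2h^2-S$. Using the entries of \eqref{eq:disc812} one computes directly
$$(S')^2=4(h^2)^2-4\,h^2\cdot S+S^2=12-12+7=7,\qquad h^2\cdot S'=6-3=3,\qquad P\cdot S'=2-\eta,$$
so that in the basis $(h^2,P,S')$ the Gram matrix of $M_\eta$ is exactly $G_{2-\eta}$. The transition matrix fixes $h^2$ and $P$ and sends $S\mapsto 2h^2-S$, hence has determinant $-1$; thus $\langle h^2,P,S\rangle=\langle h^2,P,S'\rangle$ as sublattices, and the base change is genuinely unimodular.

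Applying this with $\eta=3$ gives $(M_3,h^2)\cong(M_{-1},h^2)$ and with $\eta=2$ gives $(M_2,h^2)\cong(M_0,h^2)$, whence by the reduction above $\mathcal C_{M_3}=\mathcal C_{M_{-1}}$ and $\mathcal C_{M_2}=\mathcal C_{M_0}$. I expect the only delicate point to be precisely this insistence that the isometry \emph{fix} $h^2$ and be realized by a unimodular integral base change: it is this, rather than a mere abstract isometry of lattices, that lets us conclude equality of components without invoking any transitivity or monodromy statement. As a consistency check, the induced involution $\eta\mapsto 2-\eta$ on $\{-1,0,1,2,3\}$ has the single fixed point $\eta=1$, matching the reduction from five labelled lattices in Thm.~\ref{cap812} to three geometric components.
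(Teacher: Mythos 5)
Your proof is correct, and it takes a genuinely different route from the paper's. The paper argues indirectly: it invokes \cite[Algorithm 7.6]{yangyu} to put the intersection lattice of the generic member of any irreducible component of $\mathcal C_{8}\cap\mathcal C_{12}$ into the normal form with Gram matrix having $h^2$-column $(3,1,0)$ and lower-right block $\left(\begin{smallmatrix}3&\tau\\ \tau&4\end{smallmatrix}\right)$, notes that $\tau$ and $-\tau$ give isometric lattices, and then observes that the determinant $32-3\tau^2$ equals $20$ only for $\tau=\pm2$ and $29$ only for $\tau=\pm1$, so each of these discriminants supports a unique component. Your argument instead stays entirely inside $M_\eta$: the substitution $S\mapsto S'=2h^2-S$, which fixes $h^2$ and $P$ and is unimodular (determinant $-1$), carries $G_\eta$ to $G_{2-\eta}$, so the labelled sublattice $\langle h^2,P,S\rangle\subset M(X)$ is \emph{literally the same subgroup} as $\langle h^2,P,S'\rangle$, and primitivity and the marking by $h^2$ are untouched; hence $\mathcal C_{M_\eta}=\mathcal C_{M_{2-\eta}}$ on the nose. (Your computations $(S')^2=7$, $h^2\cdot S'=3$, $P\cdot S'=2-\eta$ check out, and the involution $\eta\mapsto 2-\eta$ is exactly the symmetry of the discriminant $-3\eta^2+6\eta+29=32-3(\eta-1)^2$.) What your approach buys is self-containedness and an explicit reason for the pairing, with $\eta=1$ as the unique fixed label; what the paper's normal-form approach buys is a uniform statement that there is only one component per discriminant value, which would also rule out coincidences not visible from a single guessed base change. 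For the lemma as stated, your argument is complete.
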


\begin{proof}
By applying \cite[Algorithm 7.6]{yangyu}, we find that the generic element of any irreducible component of the intersection $\mathcal C_{12}\cap \mathcal C_{8}$ has an intersection lattice of type

$$\left(
\begin{array}{ccc}
    3 & 1 & 0 \\
    1 & 3 & \tau \\
    0 & \tau & 4
\end{array}\right),$$

for certain choice of the basis of the lattice and $\tau\in \mathbb{Z}$. Moreover, lattices obtained from $\tau$ and $-\tau$ are isometric. The determinant of this matrix is $32-3\tau^2$ 
 hence it takes the value 20 only for $\tau=\pm 2$, and hence there is a unique irreducible component with discriminant 20 and $\mathcal{C}_{M_{-1}}$ and $\mathcal{C}_{M_3}$ coincide. 

\smallskip

The same argument holds for $\mathcal{C}_{M_0}$ and $\mathcal{C}_{M_2}$, with values of $\tau=\pm 1$.
\end{proof}

\bigskip

\subsection{Explicit examples of cubic fourfolds in the irreducible components of $\mathcal{C}_{12}\cap\mathcal{C}_8$}\label{compC12S}{\rm ($ \mathcal{C}_{M_\epsilon}\neq\emptyset$ for $\epsilon\in\{-1,0,1,2,3\}$)\\

First of all, we need two technical results.

\begin{prop}\label{oadpvsdef}
Let $Y=\tilde{S}\cup \tilde{P}\subset \PP^5$ be a reducible quartic surface given by the union of a plane $\tilde{P}$ and a smooth cubic scroll $\tilde{S}$ intersecting along a line $\tilde{L}=\tilde{S}\cap \tilde{P}$. Then $Y$ is OADP if and only if $\tilde{L}^2=0$ (that is, $\tilde{L}$ is a line of the ruling of $\tilde{S}$) and secant defective if and only if $\tilde{L}^2=-1$ (that is, $\tilde{L}$ is the directrix line of $\tilde{S}$) .
\end{prop}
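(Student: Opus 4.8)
The plan is to reduce the statement to a single linear-system computation on the abstract model of the cubic scroll, via a projection argument. Write $\tilde S$ as the Hirzebruch surface $\mathbb F_1$ embedded in $\Lambda:=\langle\tilde S\rangle=\PP^4$ by the hyperplane class $H=E+2F$, where $E$ is the directrix ($E^2=-1$) and $F$ the fibre of the ruling ($F^2=0$, $E\cdot F=1$); one checks $H^2=3$ and $H\cdot E=H\cdot F=1$, so both the directrix and the rulings are lines. Since $Y$ is nondegenerate ($\tilde P\not\subset\Lambda$, otherwise $Y\subset\PP^4$), the intersection $\Lambda\cap\tilde P$ is exactly the line $\tilde L$, and $\tilde L$, being a line on the scroll, is either the directrix ($\tilde L^2=-1$) or a ruling line ($\tilde L^2=0$); these two cases are exhaustive. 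I would then reduce to counting \emph{bridge} secants: through a general $x\in\PP^5$ there passes no secant internal to $\tilde S$ (these lie in $\Lambda=\PP^4$, and $x\notin\Lambda$) and none internal to $\tilde P$ (these lie in the plane $\tilde P$), so every secant of $Y$ through $x$ joins a point $s\in\tilde S$ to a point $p\in\tilde P$. Thus $Y$ is OADP exactly when the number of such lines is $1$, and $Y$ is secant defective exactly when it is $0$ for general $x$ (so that $\Sec(Y)\neq\PP^5$).

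The key observation is that this count equals the degree of the projection of the scroll from the plane. Let $\pi_{\tilde P}\colon\PP^5\dashrightarrow\PP^2$ be the projection with centre $\tilde P$. Since $\pi_{\tilde P}(a)=\pi_{\tilde P}(b)$ precisely when the line $\overline{ab}$ meets $\tilde P$, a bridge secant through $x$ corresponds to a point $s\in\tilde S$ with $\pi_{\tilde P}(s)=\pi_{\tilde P}(x)$, and distinct such $s$ give distinct lines through $x$ (two coinciding would force a secant of $\tilde S$ through $x$, impossible for $x\notin\Lambda$). Hence the number of bridge secants through a general $x$ equals $\deg\bigl(\pi_{\tilde P}|_{\tilde S}\colon\tilde S\dashrightarrow\PP^2\bigr)$, with a non-dominant map having ``degree $0$''. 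The linear forms vanishing on $\tilde P$ restrict injectively onto the $3$-dimensional space of forms on $\Lambda$ vanishing on $\tilde L$, so $\pi_{\tilde P}|_{\tilde S}$ is given by the linear system $|H-\tilde L|$ on $\mathbb F_1$; since $h^0(H-\tilde L)=3$ in both cases below, this system is complete.

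It then remains to compute $|H-\tilde L|$. If $\tilde L=E$ is the directrix ($\tilde L^2=-1$), then $H-\tilde L=2F$, whose morphism factors through the ruling $\mathbb F_1\to\PP^1$ followed by $\PP^1\hookrightarrow\PP^2$ as a conic; its image is a conic, so $\pi_{\tilde P}|_{\tilde S}$ is not dominant, no bridge secant passes through a general $x$, and $Y$ is secant defective. If instead $\tilde L=F$ is a ruling line ($\tilde L^2=0$), then $H-\tilde L=E+F$, the pullback of $\O_{\PP^2}(1)$ under the blow-down $\mathbb F_1\to\PP^2$; this map is birational (degree $1$, reduced general fibre), so through a general $x$ there passes exactly one bridge secant, cutting $Y$ in the reduced length-two scheme $\{s,p\}$, and $Y$ is OADP. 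As the two cases are complementary and exhaustive, both equivalences in the statement follow.

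I expect the difficulty to be technical care rather than a deep obstacle, concentrated in the identification of the secant count with $\deg(\pi_{\tilde P}|_{\tilde S})$. One must handle the base locus $\tilde L$ of this rational map, confirm that the system cut out by $\tilde P$ is the \emph{complete} system $|H-\tilde L|$ and not a proper subsystem, and verify that in the OADP case the unique bridge secant through a general $x$ meets $Y$ in a genuinely reduced length-two scheme, not tangentially or with extra incidences along $\tilde L$. Each point is routine given the explicit description of $\mathbb F_1$, but these are what turn the clean ``degree of a projection'' heuristic into a proof.
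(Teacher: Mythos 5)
Your argument is correct, but it takes a genuinely different route from the paper's. The paper projects $Y$ from a general point $p\in\PP^5$ into $\PP^4$ and counts the secants through $p$ as $\deg\tilde{S}\cdot\deg\tilde{P}-\mult_{\tilde{L}}(\tilde{S}\cap\tilde{P})$, evaluating the excess multiplicity along the line by Fulton's formula to get $2-\tilde{L}^2$, so that the secant count is $1+\tilde{L}^2$, i.e.\ $1$ or $0$. You instead reduce to bridge secants and identify their number with the degree of the projection of the scroll from the plane, computed via the linear system $|H-\tilde{L}|$ on $\mathbb{F}_1$: $|2F|$ maps onto a conic (non-dominant) when $\tilde{L}$ is the directrix, while $|E+F|$ is the birational blow-down when $\tilde{L}$ is a ruling line. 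The two computations agree, and both rest on the same initial reduction (a general point of $\PP^5$ lies on no secant internal to $\tilde{S}$, which sits in the hyperplane $\langle\tilde{S}\rangle$, nor internal to $\tilde{P}$) and leave the same routine residual checks (reducedness of the unique secant, behaviour along the base locus $\tilde{L}$), which you correctly flag; note also that both proofs, yours explicitly and the paper's implicitly, need $Y$ nondegenerate. What the paper's excess-intersection formula buys is uniformity: the identical computation is reused verbatim for the Veronese-plus-plane configuration later on. Your linear-system argument is more elementary and yields extra geometric information --- in the defective case it exhibits the bridge secants as sweeping out the cone over a conic with vertex $\tilde{P}$, which is exactly the description of $\Sec(Y)$ as $\langle\tilde{S}\rangle$ union the join of $\tilde{S}$ and $\tilde{P}$ that the paper records separately in Section \ref{compC12S}.
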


\begin{proof}
Consider a cubic scroll $S\subset \PP^4$ and a plane $P$ also in $\PP^4$ that intersects $S$ along a line $L.$ Let $K_S$ be the canonical sheaf. We observe that $K_S\cdot L = -2 - L^2$. Then, by \cite[Prop. 9.1.1, third formula]{Fu} , we have that the multiplicity of the scheme theoretic intersection $S\cap P$ along $L$ 
is given by

\begin{align*} 
mult_L(S \cap P) &= 5 + K_S\cdot L + K_P\cdot L +2 \\
& = 5 - 2 - L^2 -3 +2 \\
& = 2 - L^2.
\end{align*}

This formula gives 2 or 3 depending on whether $L^2=0$ or $L^2=-1$.
Suppose now that we have our $Y\subset \PP^5$, then the computation above implies that $Y$ is OADP if and only if $\tilde{L}^2=0$. In fact, let us consider a general $p\in\PP^5$ and call $S$ and $P$ the images of $\tilde{S}$ and $\tilde{P}$ in $\PP^4$ via the projection off $p$. If $S$ and $P$ intersect along a set $\Omega$, then the number of secants to $Y$ through $p$ is equal to $deg(S)\cdot deg(P)- mult_{\Omega}(S\cap P)$, which gives $3 - mult_L(S \cap P)$ in our particular case. This implies that $Y$ is OADP if $mult_L(S\cap P)=2$, and secant defective if $mult_L(S\cap P)=3$. 
\end{proof}

\begin{prop}\label{verovsscroll}
Let $Y=\tilde{S}\cup \tilde{P}$ be as in Prop. \ref{oadpvsdef}. Then $Y$ is OADP if and only if it is the flat limit of a family of quartic scrolls in $\PP^5$; $Y$ is secant defective if and only if it is the flat limit of Veronese surfaces in $\PP^5$.
\end{prop}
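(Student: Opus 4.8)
The plan is to reduce everything to the dichotomy already established in Proposition \ref{oadpvsdef}, and then to produce two explicit degenerations together with a separation argument. By Proposition \ref{oadpvsdef}, the surface $Y$ is OADP precisely when $\tilde L$ is a ruling of $\tilde S$ (so $\tilde L^2=0$) and is secant defective precisely when $\tilde L$ is the directrix (so $\tilde L^2=-1$); these are the only two possibilities for a line on a smooth cubic scroll. A direct Riemann--Roch computation shows that $Y$, a smooth quartic scroll, and the Veronese surface all carry the same Hilbert polynomial $2t^2+3t+1$ in $\PP^5$, so they are all points of one Hilbert scheme $\mathcal H=\operatorname{Hilb}^{2t^2+3t+1}(\PP^5)$, and ``flat limit'' means lying in the closure of the appropriate locus of $\mathcal H$. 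Thus the two equivalences amount to matching the case $\tilde L^2=0$ with the closure of the quartic scrolls and the case $\tilde L^2=-1$ with the closure of the Veronese surfaces.

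First I would construct the two degenerations. For $\tilde L^2=-1$ I would write the Veronese as the rank $\le 1$ locus of a symmetric $3\times 3$ matrix of linear forms and degenerate that matrix so that its $2\times 2$ minors cut out $\tilde S\cup\tilde P$ with $\tilde P$ attached along the directrix; this exhibits $Y$ as the flat limit of a one-parameter family of Veronese surfaces. For $\tilde L^2=0$ I would use the determinantal (maximal minors of a $2\times 4$ matrix of linear forms) presentation of a quartic rational normal scroll and degenerate the matrix so that the scroll breaks into a cubic scroll and a plane meeting along a ruling, exhibiting $Y$ as the flat limit of quartic scrolls. In both cases flatness is immediate once one checks that the Hilbert polynomial stays constant, and the real content is to arrange the special fibre to be exactly $Y$ with the prescribed value of $\tilde L^2$.

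These constructions give the ``only if'' directions. For the converse of the Veronese statement I would use that secant defectiveness is closed under specialization: since the secant variety of a degree $4$ surface has bounded degree, the hypersurfaces containing $\Sec(Y_t)$ for a defective general member have bounded degree, and their flat limit contains $\Sec(Y_0)$, forcing $\dim\Sec(Y_0)\le 4$. Hence any flat limit of Veronese surfaces is secant defective, i.e.\ has $\tilde L^2=-1$, which closes that equivalence. The same closedness also shows that an OADP surface $Y$ (with $\tilde L^2=0$) cannot be a flat limit of Veronese surfaces, since it is not defective.

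The remaining and most delicate point is the converse of the scroll statement: one must rule out that the secant defective surface $Y$ (with $\tilde L^2=-1$) is a flat limit of quartic scrolls. Here closedness of defectiveness is not enough, since a defective surface might a priori sit in the boundary of the component of scrolls. The approach I would take is deformation-theoretic: compute the normal sheaf cohomology of the reducible surface via the Mayer--Vietoris type sequences for $N_{Y/\PP^5}$ along $\tilde L$, and show that $[Y]$ is an unobstructed (smooth) point of $\mathcal H$, with $h^0(N_{Y/\PP^5})$ equal to the dimension of the component containing it and $h^1(N_{Y/\PP^5})=0$; the explicit models of the Appendix can be used to certify this by a Macaulay2 computation. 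A smooth point of $\mathcal H$ lies on a unique irreducible component, and since the construction places $[Y]$ in the component whose general member is a Veronese surface, it cannot simultaneously lie in the distinct component of quartic scrolls. This normal-bundle computation, showing that $[Y]$ is a smooth point sitting on a single component, is the main obstacle; the rest is bookkeeping around Proposition \ref{oadpvsdef}.
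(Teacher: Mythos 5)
Your overall architecture is sound and in one respect more explicit than the paper's: you correctly isolate the four separate claims hidden in the two biconditionals, and your use of semicontinuity of secant defectiveness to show that a flat limit of Veronese surfaces must be the defective (directrix) configuration is a clean observation that the paper leaves implicit. However, there is a genuine gap at the very center of the argument. Your two degenerations are only described as ``degenerate the symmetric $3\times 3$ matrix so that the minors cut out $\tilde S\cup\tilde P$ attached along the directrix'' and ``degenerate the $2\times 4$ matrix so that the scroll breaks along a ruling,'' but no mechanism is given for \emph{why} the Veronese degeneration must attach the plane along the $(-1)$-curve and the scroll degeneration along a $0$-curve. That matching of cases is precisely the content of the proposition; saying one ``would arrange'' it begs the question, since a priori one could try to arrange the opposite. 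The paper's proof (following \cite[Sect.~3]{CMR}) supplies exactly this mechanism: it degenerates the surface by letting the center of projection tend to a point $\hat p$ \emph{on} the surface, so that the flat limit is the blow-up at $\hat p$ union a plane glued along the exceptional line; in the Veronese case every line of the resulting cubic scroll is the image of a conic of $\Sigma$ through $\hat p$ and hence meets the exceptional line, forcing it to be the directrix, while in the quartic-scroll case the exceptional line replaces a contracted ruling and meets no other line of the ruling, forcing self-intersection $0$. Some argument of this kind, or an explicit verification on your determinantal families, is indispensable.

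A second, lesser issue: your proof of the remaining converse (that the directrix configuration is not a limit of quartic scrolls) hinges on an unperformed computation, namely $h^1(N_{Y/\PP^5})=0$ together with $h^0(N_{Y/\PP^5})$ equal to the dimension of the Veronese component, so that $[Y]$ is a smooth point of the Hilbert scheme lying on a unique component. This is a reasonable strategy (and the paper does not address this direction explicitly either), but as written it is a promissory note: if $[Y]$ turned out to be a singular point lying on both components, the argument would collapse. Either carry out the normal-bundle computation or replace it by a direct geometric argument distinguishing the two smoothings at $[Y]$.
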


\begin{proof}

By checking the Hilbert polynomial, one  observes that the reducible surface $\tilde{S}\cup \tilde{P}$ can be a degeneration of either a Veronese surface, or a quartic scroll. By using a construction contained in \cite[Sect. 3]{CMR}, we will show that either cases can occur, and that, in particular, it is the self intersection of the line $\tilde{S}\cap \tilde{P}$ that determines in which case we are.

\smallskip

 Let $\Sigma$ be a Veronese surface in $\PP^5$. Let us consider the family of morphisms $\pi_p: \Sigma \to \PP^4$ given by the linear projection off a point $p\in\PP^5$, generically not lying on $\Sigma$ but whose limit $\hat{p}$ lies on the surface. The limit map, i.e. the projection off $\hat{p}\in \Sigma$, is not a morphism. Nevertheless, following \cite[Sect. 3]{CMR}, we can resolve it as a morphism $\tilde{\pi}_{\hat{p}}$ by replacing $\Sigma$ with $\widehat{\Sigma}\cup Z$, where $\widehat{\Sigma}$ is the blow-up of $\Sigma$ in $\hat{p}$ and $Z$ a $\PP^2$ intersecting $\widehat{\Sigma}$ along the exceptional divisor. In particular, $\widehat{\Sigma}\cup Z$ is the flat limit of a family of surfaces where each other element is a Veronese surface. Since $\Sigma$ does not contain lines, we observe that  $\widehat{\Sigma}\cup Z$ is sent isomorphically by $\tilde{\pi}_{\hat{p}}$ onto a reducible surface in $\PP^4$ whose components are a cubic scroll $S$ plus a plane $B$ (in fact the degree is 4) that intersects the scroll along the exceptional divisor of $\hat{p}$, which is a line. The fact that $S$ is the cubic scroll is due to the fact that it is exactly the image of the blow-up of $\widehat{\Sigma}$. Moreover, the lines inside $S$ are the projections of the conics inside $\Sigma$ passing through $\hat{p}$, and they all intersect the exceptional line. This means that the exceptional divisor is the $(-1)$-curve of $S$, and by Prop. \ref{oadpvsdef} this is equivalent to $Y$ being secant defective.

 On the other hand, if $\Sigma$ is a quartic scroll, the argument of \cite{CMR} runs basically the same way. The only difference is that the morphism $\tilde{\pi}_{\hat{p}}$ that resolves the projection off $\hat{p}$ contracts a line, giving the right morphism from the (blown-up) quartic scroll to the cubic scroll. The images of the lines of the ruling of $\Sigma$ do not intersect the exceptional line, that just replaces the contracted fiber, and hence has zero self-intersection. By Prop. \ref{oadpvsdef}, this is equivalent to $Y$ being OADP.

\end{proof}

\begin{Remark}
This degeneration of the Veronese surface has been considered also in \cite[Thm. 2.5]{GR}.
\end{Remark}

\subsubsection{$S\cap P$ is a finite set.} In the appendix, via an easy Macaulay calculation \cite{macaulay2}, we display examples of smooth
cubic hypersurfaces $X\subset\PP^5$ containing a smooth rational normal scroll  $S$ of degree 3 and a plane $P$ such that $S\cap P$ is a scheme
of length $\epsilon$, $\epsilon\in\{0,\ldots,3\}$ consisting exactly of $\epsilon$ reduced points, see Example \ref{exscroll} $(c,d,e,f)$.
A general element $X\in\mathcal {C}_{M_\epsilon}$, $\epsilon\in\{0,1,2,3\}$,   has $\rk(M(X))=3$.

\subsubsection{$S\cap P$ is a conic.} In Example \ref{exscroll} $(b)$ , we construct $X\subset\PP^5$ a smooth cubic hypersurface containing a smooth rational normal scroll $S$ of degree 3 and a plane $P$ spanned by a conic $C\subset S$. By Eq. \eqref{eq:excess}, we have $S\cdot P=-1$, hence $X\in \mathcal{C}_{M_{-1}}$. In this case $S\cup P\subset <S>=\p^4$ is a degenerate reducible surface whose hyperplane sections have arithmetic genus one, since they are given by a twisted cubic plus one of its secant lines.

\subsubsection{$S\cap P$ is a line of the ruling.} Let $X\subset\PP^5$ be a smooth cubic hypersurface containing a smooth rational normal scroll $S$ of degree 3 and a plane $P$ such that $P\cap S=P\cap <S>$ is a line of the ruling of $S$. An example of such a cubic fourfold is developed in Ex. \ref{exscroll} $(7)$.   By \eqref{eq:excess}, in this case we have  $S\cdot P=0$, so that $X\in \mathcal{C}_{ M_0}$. In particular  such cubic hypersurfaces are rational since $S\cup P$ is a (small, equidimensional) OADP surface (see Prop. \ref{oadpvsdef} for an explanation).

More precisely, $S\cup P$ is a flat limit of a family of quartic scrolls in $\PP^5$ (see Prop. \ref{verovsscroll}), hence the corresponding cubics fourfold belong to $\mathcal{C}_8\cap \mathcal{C}_{12}\cap \mathcal{C}_{14}$. These examples of cubic fourfolds hence lie also in the irreducible component of $\mathcal{C}_8\cap \mathcal{C}_{14}$ where the plane intersects the scroll in 3 points (see \cite{ABBVA,BRS}), in fact $P\cdot(S\cup P) = 0+3$. Indeed, all cubics in this component have Gram matrix of discriminant 29, which is the same as the discriminant of those in $\mathcal{C}_{M_0}\subset \mathcal{C}_8\cap \mathcal{C}_{12}$.

\subsubsection{$S\cap P$ is the directrix of $S$.}Let $X\subset\PP^5$ be a smooth cubic hypersurface containing a smooth rational normal scroll $S$ of degree 3 and a plane $P$ such that $P\cap S=P\cap <S>$ is
the directrix line of $S$. We produce such examples in the Appendix (Ex. \ref{exscroll} (8)). In this case we have $S\cdot P=1$ by \eqref{eq:excess}, hence  $X\in\mathcal{C}_{M_1}$. The scheme $S\cup P\subset 
\PP^5$ is a reducible, secant defective surface (see Prop. \ref{oadpvsdef}) which is a flat projective degeneration of a Veronese surface in $\PP^5$ (see Prop. \ref{verovsscroll}) . In particular the secant lines to $S\cup P$ fill two irreducible varieties of dimension 4: $<S>$ and the join of $S$ and $P$ which is a quadric surface with vertex $P$. As a consequence, these cubics are contained in $\mathcal{C}_{20}\cap \mathcal{C}_8\cap \mathcal{C}_{12}$ and notably in the component of $\mathcal{C}_{20}\cap \mathcal{C}_8$ where $P \cdot V= P\cdot (S\cap P)= 1+3=4$ (see Sect. \ref{tripleint} for a full description of this component).

\section{Cubic hypersurfaces in  $\mathcal C_{20}$}\label{c20}

The closure of the locus of smooth cubic hypersurfaces $X\subset\p^5$ containing a Veronese surface
$V\subset\p^5$ is indicated with $\mathcal C_{20}$. Since $V^2=12$, the discriminant of a general
$X\in\mathcal C_{20}$ with $\rk(M(X))=2$ is exactly 20.

\begin{thm}\label{compC208} There are seven  irreducible components of $\mathcal C_8\cap \mathcal C_{20}$
 indexed by  $P\cdot V=\gamma\in\{-2,-1, 0, 1, 2, 3, 4\}$, where $P$ 
 is a plane and $V$ the class of
a surface such that $V^2=12$ and $V\cdot h^2=4$. For $\gamma=-1,1$ and $3$ each smooth cubic hypersurface
belonging to the corresponding irreducible component is rational.
\end{thm}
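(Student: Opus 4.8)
The plan is to run the same lattice-theoretic analysis as in the proof of Theorem \ref{cap812}, now applied to the rank-three lattice $M_\gamma = \langle h^2, P, V\rangle$ carried by the generic $X \in \mathcal C_8 \cap \mathcal C_{20}$, whose Gram matrix is
$$\begin{pmatrix} 3 & 1 & 4 \\ 1 & 3 & \gamma \\ 4 & \gamma & 12 \end{pmatrix}, \qquad \gamma = P\cdot V.$$
First I would compute $\det M_\gamma = -3\gamma^2 + 8\gamma + 48$ and impose positivity (Sylvester's criterion, legitimate since $M(X)$ is positive definite by the Hodge--Riemann relations), which confines $\gamma$ to the eight integers $-2,\dots,5$. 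Then, by \cite[Lemma 2.4]{YYu}, a value of $\gamma$ yields an empty locus exactly when $M_\gamma$ contains a short root: the vector $(1,1,-1)$ has norm $2$ when $\gamma = 5$, so this value drops out, whereas a short direct check on the form $3a^2 + 3b^2 + 12c^2 + 2ab + 8ac + 2\gamma bc$ shows that none of $\gamma \in \{-2,-1,0,1,2,3,4\}$ carries a norm-$2$ vector (the minimum of the form being $3$, attained at $(1,0,0)$). This leaves exactly the seven advertised values and shows each locus $\mathcal C_{M_\gamma}$ is non-empty.

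Next I would establish irreducibility of each $\mathcal C_{M_\gamma}$ together with distinctness of the seven. The discriminants are $d(M_\gamma) = 20,\,37,\,48,\,53,\,52,\,45,\,32$ for $\gamma = -2,\dots,4$; as these are pairwise distinct and the discriminant is an isometry invariant, no two components can coincide (in contrast with Lemma \ref{12coincide}, where repeated discriminants forced identifications). For irreducibility I would repeat the overlattice bookkeeping of Theorem \ref{cap812}: a putative primitive overlattice $B \supset M_\gamma$ is presented via a completing vector $U = \tfrac1n(x' h^2 + y' P + V)$ with $0 \le x', y' < n$ and $[B:M_\gamma]^2 \mid d(M_\gamma)$, and one enumerates the finitely many triples $(n,x',y')$ giving an integral Gram matrix, discarding each resulting $B$ either because it contains a short root (hence is unrealized by a smooth cubic) or because its primitive part $(h^2)^\perp$ fails to be even, contradicting the evenness of $H^4_{prim}(X,\mathbb{Z})$. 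The cases $\gamma = -1,1$ are immediate, their discriminants $37$ and $53$ being prime. This overlattice analysis, spread over the five composite discriminants $\gamma \in \{-2,0,2,3,4\}$, is the main obstacle: it is the one genuinely case-laden computation, and the place where one must take care not to overlook an admissible $B$.

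Finally, for the rationality claim I would use the quadric surface fibration $\widetilde{\pi}_P : \Bl_P X \to \PP^2$ obtained by projecting off $P$. Its generic fibre is the residual quadric of the plane inside a cubic surface section, and so has class $Q = h^2 - P$ (note $Q\cdot h^2 = 2$, as it must for a quadric surface). Hence
$$V \cdot Q = V\cdot h^2 - V\cdot P = 4 - \gamma.$$
For the odd values $\gamma = -1, 1, 3$ this equals $5, 3, 1$ respectively, all odd, so $V$ is a class of odd intersection with the fibre and Proposition \ref{multisection} yields rationality of the generic cubic in each of these three components, exactly as in the $\epsilon = 0$ case of Theorem \ref{cap812}. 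To upgrade this to \emph{every} smooth $X$ in the component I would invoke \cite{KT}, since rationality specializes in smooth families. I expect the even values $\gamma \in \{-2,0,2,4\}$ to fall outside this argument, the number $4-\gamma$ being even there, which is consistent with the theorem asserting rationality only for the three odd classes; for those one would instead search for reducible OADP surfaces inside $X$ (as in the discussion of $\mathcal{C}_{20}\cap\mathcal{C}_8\cap\mathcal{C}_{12}$), but that geometry lies beyond the stated claim.
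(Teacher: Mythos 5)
Your proposal reproduces the paper's proof strategy essentially step for step: the same Gram matrix, the determinant $-3\gamma^2+8\gamma+48$ with Sylvester positivity confining $\gamma$ to $\{-2,\dots,5\}$, exclusion of $\gamma=5$ by the short root $(1,1,-1)$, non-emptiness of the remaining seven loci via the absence of short roots (\cite[Lemma 2.4]{YYu}), square-freeness of $37$ and $53$ disposing of $\gamma=\pm 1$, and the overlattice enumeration through a completing vector $U=\tfrac1n(x'h^2+y'P+V)$ with each candidate $B$ killed either by a short root or by non-evenness of $(h^2)^\perp$. Two points of divergence are worth noting. First, you describe but do not execute the overlattice enumeration for $\gamma\in\{-2,0,2,3,4\}$; that case-by-case listing of admissible $(n,x',y')$ and the disposal of each resulting $B$ is precisely where the substance of the paper's irreducibility proof lies, so as written this part is a correct plan rather than a proof. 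Second, your rationality argument is uniform: you use $Q\cdot V=4-\gamma\in\{5,3,1\}$ odd and Proposition \ref{multisection} for all of $\gamma=-1,1,3$, whereas the paper argues this way only for $\gamma=-1$ and $\gamma=1$ and for $\gamma=3$ instead exhibits $V\cup P$ as a reducible OADP surface. Your version is valid and arguably cleaner for the bare rationality claim (and your appeal to \cite{KT} to pass from the generic to every member of the component mirrors what the paper does in Theorem \ref{cap812}); the paper's OADP route carries extra geometric content in keeping with its theme. Your observation that the seven discriminants are pairwise distinct, so that no identifications as in Lemma \ref{12coincide} can occur, is a useful point the paper leaves implicit.
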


\begin{proof} Let us first observe that the generic cubic $X$ contained in the intersection $\mathcal C_8\cap\mathcal C_{20}$ has intersection lattice as follows,

\begin{equation}\label{eq:disc812}
\begin{tabular}{|c|c|c|c|}
\hline     &  $h^2$ & P & V\\
\hline  $h^2$ & 3 & 1  & 4 \\
\hline   P & 1 & 3 & $\gamma$ \\
\hline  V & 4 & $\gamma$ & 12 \\
\hline
\end{tabular},
\end{equation}

where $V$ is the class of a Veronese surface inside the cubic fourfold $X$. Let us denote by $N_\gamma$ the lattice with this intersection matrix. The determinant of this matrix is $-3\gamma^2 + 8\gamma +48$, and the only integer values for which it is positive are $\{-2,-1,0,1,2,3,4,5\}$. These values could possibly give non-empty components of the intersection $\mathcal C_8\cap\mathcal C_{20}$. We will denote by $\mathcal{D}_{N_\gamma}$ the locus of cubic fourfolds $X$ such that $N_\gamma\subset M(X)$ is a saturated sublattice.

For $\gamma=5$, it is straightforward to see that $\left(\begin{array}{c} 1 \\ 1 \\ -1\
\end{array}\right)$ is a short root, hence $\mathcal{D}_{N_5}$ is empty. 

The other possible values of $\gamma$ give rise to lattices with no short roots, hence by \cite[Lemma 2.4]{YYu} the associated components $\mathcal{D}_{N_\gamma}$ are non-empty. Hence we are left only with the values $-2,-1,0,1,2,3,4$. The corresponding discriminants are:

\begin{equation}\label{discr820}
\begin{tabular}{|c|c|}
\hline  $\gamma$   &  $d(N_\gamma)$\\
\hline -2 & 20\\
\hline -1& 37\\
\hline    0 & 48\\
\hline  1&  53\\
\hline 2 &  52\\
\hline   3  &45\\
\hline   4  & 32\\
\hline
\end{tabular}
\end{equation}

\medskip

In order to check irreducibility we need to check that there are no overlattices $B$ of $N_\gamma$, that embed primitively in 
$H^4(X,\mathbb{Z})$. If there exists an embedding $N_\gamma \hookrightarrow B$, then $|d(N_\gamma)|=|d(B)|\cdot[B:N_\gamma]^2$.
For $\gamma= -1, 1$, the discriminant are square-free, hence $N_1$ and $N_{-1}$ cannot have proper finite overlattices, and they correspond to irreducible components.

\medskip

We need some elementary lattice theory to prove the irreducibility of the components with $\gamma=-2,\ 0,\  2,\ 3,\ 4$.

 Consider $h^2$ and $P$ as a part of a basis of the overlattice $B$ and let $U$ be a vector that completes this to a basis such that $U = xh^2+yP +zV$,
with $x, y, z$ rational coefficients. Consider now the natural embedding of $N_\gamma$ in $B$. This can be written explicitly as follows:

$$\left(\begin{array}{ccc}
1  & 0 & x  \\
0 & 1 & y \\
0 & 0 & Z \\
\end{array}\right)^{-1} = \left(
\begin{array}{ccc}
    1 & 0 & \frac{-x}{z} \\
    0 & 1 & \frac{-y}{z} \\
    0 & 0 & \frac{1}{z}
\end{array}\right) \in \mathcal{M}_{3,3}(\mathbb Z).$$

If we take $z = \frac{1}{n}$, for some $n \in \mathbb Z$ and $x' = nx$, $y' = yn \in \mathbb{Z}$, then we can write $U = \frac{1}{n}(x'h^2 +
y'P + S)$. Moreover, by adding appropriately multiples of $h^2$ and of $P$, we can assume that $0 \leq x', y' < n$.
An explicit computation of the intersections gives the the following:

\begin{align*}
U.h^2 & =  \frac{1}{n}
(3x' + y' + 3) = a, \\
U.P & =  \frac{1}{n} (x' + 3y' + \eta ) = b, \\
U.U & =  \frac{1}{n^2} (3x'^2 + 3y'^2 + 8x' + 2\eta y' + 2x'y' + 12) = c. 
\end{align*}

With this notation, the Gram matrix of $B$ is 

$$\left(
\begin{array}{ccc}
    3 & 1 & a \\
    1 & 3 & b \\
    a & b & c
\end{array}\right).$$

Since the discriminant of $N_{-2}$ is 20, hence $[B:N_{-2}]$ can only be 4 and $n=2$. The only cases where the Gram matrix of B has integer entries are:
 
 \begin{itemize}
     \item $n=2,\ x'=0,\ y'=0,$ which gives $a=2,\ b=-1,\ c=3;$
     
     \item $n=2,\ x'=1,\ y'=1,$ which gives $a=4,\ b=1,\ c=6.$
     
 \end{itemize}
 In the first of the two cases, one computes explicitly that the Gram matrix of B has $\left(\begin{array}{c} -1 \\ 0 \\ 1 
\end{array}\right)$ as a short root, hence it is not a lattice of a cubic fourfold. In the second case, the Gram matrix has a short root $\left(\begin{array}{c} -2 \\ 0 \\ 1 
\end{array}\right)$, hence the component $\mathcal{D}_{N_{-2}}$ is irreducible.\\

Consider now $N_0$. Since the discriminant of $N_{0}$ is 48, hence $[B:N_0]$ can only be 16 (resp. 4) and $n=4$ (resp. $n=2$). The only cases where the Gram matrix of B has integer entries are:
 
 \begin{itemize}
     \item $n=2,\ x'=0,\ y'=0,$ which gives $a=2,\ b=0,\ c=3;$
     
     \item $n=2,\ x'=1,\ y'=1,$ which gives $a=4,\ b=1
     2,\ c=7.$
     
 \end{itemize}
 In both cases, one computes explicitly that the Gram matrices of B have $\left(\begin{array}{c} -1 \\ 0 \\ 1 
\end{array}\right)$ as a short root, hence they are not lattices of a cubic fourfold. Hence the component $\mathcal{D}_{N_{0}}$ is irreducible.\\

Let us consider now $N_2$. Since the discriminant of $N_{2}$ is 52, hence $[B:N_{2}]$ can only be 4 and $n=2$. The only cases where the Gram matrix of B has integer entries are:
 
 \begin{itemize}
     \item $n=2,\ x'=0,\ y'=0,$ which gives $a=2,\ b=1,\ c=3;$
     
     \item $n=2,\ x'=1,\ y'=1,$ which gives $a=4,\ b=3
    ,\ c=8.$
     
 \end{itemize}
 In the first case, the Gram matrix of B has $\left(\begin{array}{c} -1 \\ 0 \\ 1 
\end{array}\right)$ as a short root, hence it is not the lattice of a cubic fourfold.\\ In the second case, the Gram matrix of B has $\left(\begin{array}{c} -1 \\ -1 \\ 1 
\end{array}\right)$ as a short root. Hence the component $\mathcal{D}_{N_{2}}$ is irreducible.\\

The discriminant of $N_3$ is 45, hence $[B:N_{3}]$ can only be 9 and $n=3$. The only case where the Gram matrix of B has integer entries is:
 
 \begin{itemize}
     \item $n=3,\ x'=0,\ y'=2,$ which gives $a=2,\ b=3,\ c=4;$     
 \end{itemize}
which has $\left(\begin{array}{c} -1 \\ -1 \\ 1 
\end{array}\right)$ as a short root, hence it is not the lattice of a cubic fourfold and the component $\mathcal{D}_{N_{3}}$ is irreducible.

Last, the discriminant of $N_4$ is 35, hence $[B:N_{4}]$ can be 16 (resp. 4) and $n=4$ (resp. 2). The only cases where the Gram matrix of B has integer entries are:
 
 \begin{itemize}
     \item $n=2,\ x'=0,\ y'=0,$ which gives $a=2,\ b=2,\ c=3;$
          \item $n=2,\ x'=1,\ y'=1,$ which gives $a=4,\ b=4,\ c=9;$     
 \end{itemize}
which has $\left(\begin{array}{c} -1 \\ 0 \\ 1 
\end{array}\right)$ as a short root for the first case. Hence it is not a lattice of a cubic fourfold.\\
For the second case, Let us set $a=(1,-3,0)$ and $b=(0,-4,1)$ the basis of $B_{prim}$. Its Gram matrix is $\left(\begin{array}{cc} 24 &  24 \\ 24 & 25 \\ 
\end{array}\right)$ which is not even. This shows that $\mathcal{D}_{N_4}$ is an irreducible component.

\medskip

\it Rationality of $X\in \mathcal{D}_{N_{-1}}:$
\rm If $\gamma=-1$, then $X$ is rational  since all cycles $C=\alpha h^2 + \beta P + \delta V \in M(X)$, with $\alpha,\beta \in \mathbb{Z}$ and $\delta$ an odd integer, intersect the quadric surfaces of class $Q=h^2-P$ in an odd degree zero cycle. This implies rationality of $X$ by Prop. \ref{multisection}.

\it Rationality of $X\in \mathcal{D}_{N_{3}}:$
\rm If $\gamma=3$, then each $X$ in the corresponding irreducible component is rational since it contains a Veronese surface $V$ and a plane $P$ intersecting $V$ in three points. The union $V\cup P$ is a reducible OADP surface. One can see this fact by remarking that the images of $V$ and $P$ in $\PP^4$, via the projection off a generic point $p\in \PP^5$, intersect in 4 points, but 3 of them come from $V\cap P$ in $\PP^5$. This means that only one secant line to $V\cap P$ passes through $p$ in $\PP^5$.

\it Rationality of $X\in \mathcal{D}_{N_{1}}:$
\rm If $\gamma=1$, then a cubic hypersurface  in the corresponding irreducible component  is rational. In fact,
letting $Q=h^2-P$ as before, we have   $Q\cdot V=4-\gamma$ so that the rational quadric fibration defined by projection from $P$
has a rational section and the total space is hence rational by \cite[Cor. 2.2]{Ha}.

\end{proof}

\subsection{Explicit examples of cubic fourfolds in the irreducible components of $\mathcal{C}_{20} \cap \mathcal{C}_8$.}

In this section we will describe the explicit examples of smooth cubic fourfolds $X\subset \PP^5$ containing a smooth non-degenerate Veronese surface $V\subset \PP^5$ and a plane $P$, such that $V\cdot P =\gamma$. The first crucial observation (that reflects also our lattice theoretical computation) is that there are no such cubics if $\gamma<-1$ or if $\gamma>3$, as we will see here below in \ref{VPconic} and \ref{tripleint}. There are cubics that contain surfaces with this intersection theoretical properties, but the Veronese then is not smooth.

\subsubsection{$V\cap P$ is a conic.}\label{VPconic} Assume $V$ is a smooth Veronese surface. If $\gamma<0$, then the cycle $P\cdot V$ must contain a curve. Since $V$ is defined by quadratic equations, the scheme-theoretic intersection $P\cap V$ must be a conic $C\subset V$ (recall that a Veronese surface contains no lines).
From the fact that $K_V\cdot C=-3$, we easily deduce $V\cdot P=-1$ by \eqref{eq:excess}. An example of such a cubic is given in Example \ref{exveronese}  (1). 
Moreover, this argument also implies that for cubics in the components indexed by $\gamma=-2$, the class of the Veronese will represent singular/reducible degenerations of the surface, since otherwise $V \cdot P=-1$.

\begin{prop}
The union $W:=V\cup P\subset \PP^5$ of a Veronese surface and a projective plane, that intersect each other along a conic $C$, is a reducible OADP surface.
\end{prop}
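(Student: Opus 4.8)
The plan is to mimic the proof of Proposition \ref{oadpvsdef}: I would project $W=V\cup P$ from a general point $p\in\PP^5$ onto $\PP^4$ and count the secant lines to $W$ through $p$ by a residual/excess intersection computation. Write $\bar V=\pi_p(V)$ and $\bar P=\pi_p(P)$. Since the secant variety $\Sec(V)$ is only a cubic hypersurface (the Veronese is secant defective) and $p\notin P$, a general $p$ lies on no secant of $V$ alone and on no secant of $P$ alone; hence $\pi_p$ restricts to an isomorphism on $V$, so $\bar V$ is a smooth quartic surface, $\bar P$ is a plane, and every secant of $W$ through $p$ is a line joining a point of $V$ to a point of $P$. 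Such secants correspond bijectively to the isolated points of $\bar V\cap\bar P$ in $\PP^4$, that is, to the intersection points that are \emph{not} forced by the conic $C=V\cap P$ (where the two preimages coincide).

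The two surfaces $\bar V,\bar P\subset\PP^4$ meet along the image conic $\bar C$ of $C$, together with finitely many isolated points. In $H^4(\PP^4,\Z)$ one has $[\bar V]\cdot[\bar P]=\deg(\bar V)\cdot\deg(\bar P)=4\cdot 1=4$, and this total must split as the excess contribution supported on $\bar C$ plus the number of isolated (genuine) secants. The excess along $\bar C$ is computed exactly as in Proposition \ref{oadpvsdef}, using the $\PP^4$-version of the multiplicity formula of Proposition \ref{multiplicity} (with $-K_{\PP^4}\cdot\bar C=5\deg\bar C$ replacing the $3d$ term): since $\pi_p$ is an isomorphism on $V$ and on $P$, we have $K_{\bar V}\cdot\bar C=K_V\cdot C=-3$ and $K_{\bar P}\cdot\bar C=K_P\cdot C=-6$ (here $C$ is a conic inside the plane $P$), while $\deg\bar C=2$ and $g(\bar C)=0$. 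Hence
\[
\mult_{\bar C}(\bar V,\bar P)=5\cdot 2+K_V\cdot C+K_P\cdot C+2-2g=10-3-6+2=3,
\]
and the number of genuine secants through $p$ equals $4-3=1$.

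Therefore through a general point of $\PP^5$ there passes exactly one secant line to $W$, meeting $V$ and $P$ in a reduced length-two scheme, which is precisely the definition of a (reducible) OADP surface; alternatively one may invoke \cite[Cor 2.7]{BRS}, since $W$ is cut out scheme-theoretically by quadrics and we have just shown that only finitely many secants pass through a general point. The step I expect to be the main obstacle is the excess computation itself, namely justifying rigorously that for general $p$ the intersection cycle $\bar V\cap\bar P$ of total degree $4$ splits as the length-$3$ excess carried by the smooth conic $\bar C$ plus a single reduced point: this requires checking that $\bar C$ is the only positive-dimensional component of $\bar V\cap\bar P$, that $\bar V$ and $\bar P$ are smooth along $\bar C$, and that the residual point is transverse, so that Fulton's formula applies verbatim. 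Genericity of $p$ should guarantee all of this, but it is where the care is needed.
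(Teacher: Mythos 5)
Your argument is essentially the paper's own proof: project $W$ from a general point $p\in\PP^5$, use Fulton's excess intersection formula in $\PP^4$ (with the $-K_{\PP^4}\cdot \bar C=10$ term) to get multiplicity $3$ along the conic, and conclude that $\deg V\cdot\deg P-3=1$ secant passes through $p$. The extra care you flag about genericity and the alternative appeal to \cite[Cor 2.7]{BRS} are reasonable additions, but the computation and its conclusion coincide with the paper's.
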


\begin{proof}
Let us denote as usual by $K_V$ and $K_P$ the two canonical sheaves. First of all we observe that by Equation \ref{eq:excess} we have $V\cdot P= 2 +K_V\cdot C=-1$. Let us project $W$ off a generic point $p\in \PP^5$. By the same argument as in Prop. \ref{oadpvsdef}, we compute the multiplicity in $C$ of the intersection of the two surfaces in $\PP^4$. We abuse of notation by sticking to the same notation as in $\PP^5$. By \cite[Prop. 9.1.1, third formula]{Fu}, then we obtain

\begin{align*}
mult_C(V \cap P) &= 10 + K_V\cdot C + K_P\cdot C +2 \\
& = 10 - 6 - 3 + 2 \\
& = 3.
\end{align*}

The number of secants to $W$ through $p$ is equal to $deg(V)\cdot deg(P) - mult_C(V \cap P)= 4-3 =1.$ This completes the proof.

\end{proof}

\begin{prop}
Let $W=V\cup P\subset \PP^5$ the union of a Veronese surface and a projective plane, that intersect along a conic. Then $W$ is a flat deformation of a del Pezzo quintic surface.
\end{prop}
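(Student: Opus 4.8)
The plan is to show that $W$ and a smooth del Pezzo quintic share the same Hilbert polynomial, so that they live in a common Hilbert scheme of $\PP^5$, and then to connect them by an explicit flat family whose general fibre is smooth. First I would compute the Hilbert polynomial of $W=V\cup P$ from the structure sequence
$$0\to \O_W\to \O_V\oplus \O_P\to \O_C\to 0,$$
where $C=V\cap P$ is the conic. Since $V=\nu_2(\PP^2)$ carries $\O_V(1)=\O_{\PP^2}(2)$, the plane satisfies $\O_P(1)=\O_{\PP^2}(1)$, and $C\cong\PP^1$ carries $\O_C(1)=\O_{\PP^1}(2)$, one gets $p_V(m)=2m^2+3m+1$, $p_P(m)=\binom{m+2}{2}$ and $p_C(m)=2m+1$, whence
$$p_W(m)=p_V(m)+p_P(m)-p_C(m)=\frac{5m^2+5m+2}{2}.$$
On the other hand, Riemann--Roch on a del Pezzo quintic $\Sigma$ (with $\chi(\O_\Sigma)=1$, $H=-K_\Sigma$, $H^2=5$) gives $\chi(\O_\Sigma(m))=1+\tfrac12(m^2+m)H^2=\frac{5m^2+5m+2}{2}$, the same polynomial. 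In particular $W$ has degree $5$ and sectional genus $1$, and since $V$ already spans $\PP^5$ the surface $W$ is non-degenerate.

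Next I would recall that a smooth non-degenerate surface of degree $5$ and sectional genus $1$ in $\PP^5$ is exactly a del Pezzo quintic, and that these form a single irreducible $\mathrm{PGL}_6$-orbit of dimension $35$, dense in its component of the Hilbert scheme. It therefore suffices to connect $W$ to this locus by a flat family: I would produce a one-parameter family $\{W_t\}_{t\in\Delta}\subset\PP^5$ with $W_0=W$ and $W_t$ smooth for $t\neq 0$, flatness being guaranteed by the constancy of the Hilbert polynomial computed above. Concretely I would obtain such a family either by degenerating the defining quadrics of a del Pezzo quintic (the Macaulay2-assisted route used in the Appendix), or by a semistable degeneration of $\mathrm{Bl}_4\PP^2$ into two planes glued along a conic, mirroring the projection construction of \cite{CMR} used in Proposition \ref{verovsscroll}. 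A smooth member $W_t$ is then automatically non-degenerate of degree $5$ and sectional genus $1$, hence a del Pezzo quintic.

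The main obstacle is this last step: matching Hilbert polynomials only places $W$ in the correct Hilbert scheme, not in the closure of the smooth del Pezzo locus, so the real content is the \emph{smoothability} of $W$. This is delicate because $V$ and $P$ meet along the whole curve $C$, which is excess intersection in $\PP^5$ (two surfaces are expected to meet in dimension $-1$); locally along $C$ the surface $W$ looks like two $2$-planes in $\mathbb{A}^5$ meeting along a line, and one must check that the $T^1$ gluing term supported on $C$ does not obstruct the smoothing. I expect this to reduce to a cohomological vanishing on $C$ together with an explicit local smoothing of the two-planes-along-a-line singularity, after which global unobstructedness delivers the desired family.
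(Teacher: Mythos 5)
Your Hilbert polynomial computation is correct and the diagnosis in your last paragraph is exactly right --- which is the problem: equality of Hilbert polynomials only places $W=V\cup P$ in the same Hilbert scheme as the smooth quintic del Pezzo surfaces, and the entire content of the statement is that $W$ lies in the closure of the smooth locus. That step is never carried out. What you offer instead is a plan (``I would obtain such a family either by \dots or by \dots'', ``I expect this to reduce to a cohomological vanishing \dots'') rather than an argument, and the $T^1$/smoothability analysis you defer is precisely the nontrivial part. As written, the proposal has a genuine gap.

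The paper closes this gap with a short explicit construction that produces the flat family outright, with no deformation theory. Consider the rational map $\varphi\colon \PP^3\dashrightarrow\PP^6$ defined by the quadrics through three general points; its image is the quintic del Pezzo threefold. A smooth quadric surface through the three points is sent to a quintic del Pezzo surface (the system cut out on it is the anticanonical system of $\PP^1\times\PP^1$ blown up at the three points). Now let the quadric degenerate to the union of a general plane and the plane spanned by the three points: the general plane maps by the full system of conics to a Veronese surface $V$; the special plane maps by the net of conics through the three points (a quadratic Cremona map) to a plane $P$; and the two images meet along the conic which is the image of the common line of the two planes. Moving the quadric in this pencil gives a family of quintic surfaces with constant Hilbert polynomial (your computation), hence flat, with general fibre a del Pezzo quintic and special fibre $V\cup P$. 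If you want to keep your route you must actually prove smoothability of the two-planes-along-a-line gluing; the construction above is the cheapest way to obtain the required family and is what the paper does.
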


\begin{proof}
Let us consider the rational map $\varphi: \PP^3 \dashrightarrow \PP^6$ given by quadrics through 3 generic base points. The image is a quintic del Pezzo threefold $T$ and the image of a smooth quadric through the 3 points is a quintic del Pezzo surface. Consider the degenerate quadric $Q$ given by the union of a generic plane and the plane through the 3 fixed points. It is straightforward to see that the image of $Q$ is exactly $V\cup P$ and $V\cap P$ is a conic. This concludes the proof.
\end{proof}

Moreover, this argument also implies that for cubics in the components indexed by $\gamma=-2$ and $4$, the class of the Veronese will represent singular/reducible  degenerations (for the case $\gamma=4$ see Sect. \ref{tripleint} and Prop. \ref{oadpvsdef}).

\subsubsection{$V\cap P$ is a finite set, and $\#(V\cap P) \leq 3$}\label{VPfinite} If $\gamma\geq 0$ and we can suppose that $P\cap V$ is finite and that it consists of $\gamma$ points counted with multiplicity.
Since $V$ is scheme theoretically defined by quadratic equations whose first syzygies are generated by the linear ones, we get $0\leq \gamma\leq 3$ by the last part of \cite[Thm. 1.1]{EGHP}. Explicit equations for these kind of cubics are developed in Ex. \ref{exveronese} (2,3,4,5). 


\subsubsection{$V\cdot P=4$ and $V$ is singular}\label{tripleint}
The generic element of the component where $V\cap P=4$ can not be a smooth Veronese surface. In fact, 4 points on $V$, that lie all on the same plane, come from 4 points on $\PP^2$ that lie on 3 conics, which is not possible. Hence the generic element of this component should be singular or reducible.

\smallskip

From Sect. \ref{compC12S}, we recall that we constructed examples of cubics in $\mathcal{C}_8 \cap \mathcal{C}_{12}$ where $S \cdot P=1$ and $S\cap P$ is the directrix line of the cubic scroll. By Prop. \ref{verovsscroll}, these quartic surfaces are flat limits of families of Veronese surfaces. An easy Hilbert polynomial computation shows that whenever the union of a cubic scroll and a plane is a flat degeneration of a Veronese surface, the intersection $S\cap P$ must be a line. Moreover the Gram matrices of cubic fourfolds in $\mathcal{C}_8 \cap \mathcal{C}_{12}$ with $S \cdot P=1$ have discriminant 32, exactly like cubics in $\mathcal{C}_{20}\cap \mathcal{C}_8$ with $V\cdot P=4$. In fact, in this case, the self intersection of $S\cup P$ is 12, and $P\cdot(S\cup P)= 1+3=4$, and we recognize a "reducible Veronese", and the cubics containing such a configuration of surfaces live inside $\mathcal{D}_{N_4}$. This means that the locus of cubics containing $S$ and $P$ that intersect along the directrix make up a locus that is contained in the intersection $\mathcal{C}_8\cap \mathcal{C}_{20} \cap \mathcal{C}_{12}$, which is probably not cut out by other Hassett divisors, and is contained in $\mathcal{D}_{N_4}\subset \mathcal{C}_{20}$.

\medskip

\newpage

\appendix
\section{Examples of smooth cubic hypersurfaces in $\mathbb{P}^5$ and two Hassett divisors. \\   }
We shall construct explicitly, by using the software system 
Macaulay2 \cite{macaulay2},
smooth cubic hypersurfaces in $\mathbb{P}^5$ which contain 
a given smooth rational  surface 
and a plane in a certain position with regard to the surface.
More precisely, 
we shall exhibit a  smooth cubic hypersurface of $\mathbb{P}^5$ containing a Veronese surface 
 (resp. a cubic rational normal scroll) 
 and a plane intersecting the surface along one of the following schemes:
  an irreducible conic; a line; 
a set of $1\leq i\leq 3$ linearly independent reduced points;
  the empty scheme. We work over the finite field $\mathbb{F}_{31}$ but our equations hold
over fields of characteristic zero. Hence we will set $\mathbb{P}^5 := Proj(\mathbb{F}_{31}[x_0,\dots, x_5])$ and
$\mathbb{P}^2 := Proj(\mathbb{F}_{31}[t_0,\dots, t_2])$.

\subsection{Parametrizations of the surfaces} In this section we outline the 
parametrizations for the three surfaces we consider.
\subsubsection{Veronese surface} We get a parametrization of a Veronese surface 
by the map associated to the linear system of all conics in $\mathbb{P}^2$:
{\footnotesize
\begin{verbatim}
i1 : k = ZZ/31;
i2 : P2 = k[t_0..t_2];
i3 : P5 = k[x_0..x_5];
i4 : veroneseMap=map(P2,P5,gens (ideal vars P2)^2);
o4 : RingMap P2 <--- P5
\end{verbatim}
} \noindent 
The image of our parametrization is defined 
by the ideal of $2\times 2$ minors of the matrix:
$$ 
\bgroup\begin{pmatrix}{x}_{0}&
      {x}_{1}&
      {x}_{2}\\
      {x}_{1}&
      {x}_{3}&
      {x}_{4}\\
      {x}_{2}&
      {x}_{4}&
      {x}_{5}\\
      \end{pmatrix}\egroup .
$$ 

\subsubsection{Cubic rational normal scroll} 
From the definition of rational normal scroll 
surface $S(a,b)\subset\mathbb{P}^{a+b+1}$, $0\leq a\leq b$,
we deduce an obvious parametrization 
$\mathbb{P}^2\dashrightarrow\mathbb{P}^1\times\mathbb{P}^1\dashrightarrow S(a,b)\subset\mathbb{P}^{a+b+1}$:
{\footnotesize 
\begin{verbatim}
i5 : scrollMap = (a,b) -> ( 
      t_0^(b-a+1)*(for j to a list t_0^(a-j)*t_1^j) | t_2*(for j to b list t_0^(b-j)*t_1^j) 
      ); 
\end{verbatim} 
} \noindent 
The image of this parametrization is defined 
by the ideal of $2\times 2$ minors of the matrix:
$$ 
\bgroup\begin{pmatrix}{x}_{0}&
      \cdots&
      {x}_{a-1}&
      {x}_{a+1}&
      \cdots&
      {x}_{a+b}\\
      {x}_{1}&
      \cdots&
      {x}_{a}&
      {x}_{a+2}&
      \cdots&
      {x}_{a+b+1}
      \end{pmatrix}\egroup .
$$ 
In particular, for $(a,b)=(1,2)$,
 we obtain the map 
$\mathbb{P}^2\dashrightarrow S(1,2)\subset \mathbb{P}^4=V(x_5)\subset \mathbb{P}^5$:
{\footnotesize
\begin{verbatim}
i6 : S12Map=map(P2,P5,scrollMap(1,2) | {0}); 
o6 : RingMap P2 <--- P5
\end{verbatim}
} \noindent 
Let us construct the cubic scroll surface in different ways. A first one given by a rational map from $\mathbb{P}^2$ to $\mathbb{P}^4$ and then embedded it in $\mathbb{P}^5$ as follows:
{\footnotesize 
\begin{verbatim}
i7 : needsPackage "SpecialFanoFourfolds";
i8 : cubicScrollMap = map(P2, P5, {t_0^2, t_0*t_1, t_1^2, t_2*t_0, t_1*t_2,0})
i9 : J = kernel cubicScrollMap
i10 : S12 = projectiveVariety J
\end{verbatim}
} \noindent 
The second way to construct the cubic scroll surface is to see it as the blow up of $\mathbb{P}^2$ in [0:0:1]. For that, we consider the Segre embedding 

 $$ \begin{aligned}
     \mathbb{P}^1 \times \mathbb{P}^2 & \longrightarrow \mathbb{P}^5 \\
    [u:v]\times[t_0:t_1:t_2]& \longrightarrow [t_0u:t_1u:t_2u,t_0v,t_1v,t_2v]
  \end{aligned};$$
the cubic scroll surface is the intersection of the image of $\mathbb{P}^1 \times \mathbb{P}^2$ via the embedding with the hyperplane $V(x_0-x_4)$ in $\mathbb{P}^5$. Its exceptional divisor is $[u:v]\times[0:0:1]$.
{\footnotesize 
\begin{verbatim}
i11 : P12 = k[u,v,t_0..t_2]
i12 : SegreMap = map(P12, P5, {t_0*u, t_1*u, t_2*u, t_0*v, t_1*v,t_2*v})
i13 : I = kernel SegreMap
i14 : H=ideal(x_0-x_4)
i15 : S12=ideal(H,I)
\end{verbatim}
} \noindent 

The key to find explicit examples of cubic fourfold in each component of the intersection $\mathcal{C}_8 \cap \mathcal{C}_i$ with $i=12,\ 20 $,  is to construct ``by hand" a plane $P$ that has the needed intersection propriety with the class of surface $S_i$ defining $\mathcal{C}_i$. Then  find a smooth cubic hypersurface containing the constructed $P$ and $S_i$. For that, we need the following method for later use.

{\footnotesize 
\begin{verbatim}
i14 : randomElementsFromIdeal =  method(TypicalValue => Ideal)
randomElementsFromIdeal(List, Ideal) := Ideal => (L,I)->(
trim ideal((gens I)*random(source gens I, (ring I)^(-L))))
\end{verbatim}
} \noindent 
A usual trick to find the appropriate plane  $P \subset \mathbb{P}^5$ with the needed intersection property with a surface $S_i$ is by using the previous method \textit{P = randomElementsFromIdeal(\{1,1,1\},Ideal)}
and varying the \textit{Ideal} from \textit{point ideal(Si)} to \textit{intersect(point ideal(Si), point ideal(Si))} which gives a priori an idea about the intersection $P.S_i$. Here, \textit{point ideal(Si)} picks a random rational point in $Si$. Let us give an example by constructing a plane $P$ that intersects the cubic scroll $S12$, defined previously by the ideal $J$, in one point.
{\footnotesize 
\begin{verbatim}
i16 : a = point J
i17 : p = randomElementsFromIdeal({1,1,1},a)
i18 : P = projectiveVariety p
\end{verbatim}
} \noindent
Once the plane is constructed, one needs to define a smooth cubic hypersurface containing the plane and the cubic scroll as follows:
{\footnotesize 
\begin{verbatim}
i19 : X=random(3,S12+P)
\end{verbatim}
} \noindent

\medskip 
Using this method one can find explicit examples of cubic fourfolds in each component of the intersection $\mathcal{C}_{8} \cap \mathcal{C}_i$, for $i=12,\ 20$ as follows.

\begin{ex}\label{exscroll}

Here is the collection of examples of  cubics containing a cubic scroll surface and a plane.
\begin{enumerate}
 \item $\Pi_1=(x_{0},\,x_{1},\,x_{2})$; $\Pi_1 \cap S$ is a line; $C =  -6\,x_{0}x_{1}^{2}-x_{1}^{3}+6\,x_{0}^{2}x_{2}+x_{0}x_{1}x_{2}+8\,x_{1}^{2}x_{2}-8\,x_{0}x_{2}^{2}+15\,x_{0}x_{1}x_{3}-2\,x_{1}^{2}x_{3}+12\,x_{0}x_{2}x_{3}-2\,x_{1}x_{2}x_{3}-8\,x_{2}^{2}x_{3}-10\,x_{1}x_{3}^{2}-6\,x_{2}x_{3}^{2}-15\,x_{0}^{2}x_{4}-10\,x_{0}x_{1}x_{4}-9\,x_{1}^{2}x_{4}+11\,x_{0}x_{2}x_{4}+8\,x_{1}x_{2}x_{4}+10\,x_{0}x_{3}x_{4}-15\,x_{1}x_{3}x_{4}-13\,x_{2}x_{3}x_{4}-10\,x_{0}x_{4}^{2}+13\,x_{1}x_{4}^{2}-9\,x_{0}^{2}x_{5}-12\,x_{0}x_{1}x_{5}+8\,x_{1}^{2}x_{5}+15\,x_{0}x_{2}x_{5}-9\,x_{1}x_{2}x_{5}-7\,x_{2}^{2}x_{5}-14\,x_{0}x_{3}x_{5}+5\,x_{1}x_{3}x_{5}-4\,x_{2}x_{3}x_{5}-13\,x_{0}x_{4}x_{5}-11\,x_{1}x_{4}x_{5}+9\,x_{2}x_{4}x_{5}+8\,x_{0}x_{5}^{2}+2\,x_{1}x_{5}^{2}-13\,x_{2}x_{5}^{2}$;
    \item $\Pi_2=(x_{3},\,x_{4},\,x_{5})$; $\Pi_2 \cap S$ is a smooth conic; $C= -6\,x_{0}x_{1}x_{3}+6\,x_{1}^{2}x_{3}-13\,x_{0}x_{2}x_{3}-2\,x_{1}x_{2}x_{3}+13\,x_{2}^{2}x_{3}-9\,x_{1}x_{3}^{2}+8\,x_{2}x_{3}^{2}+6\,x_{0}^{2}x_{4}+7\,x_{0}x_{1}x_{4}+x_{1}^{2}x_{4}+x_{0}x_{2}x_{4}-13\,x_{1}x_{2}x_{4}+9\,x_{0}x_{3}x_{4}-x_{1}x_{3}x_{4}-8\,x_{2}x_{3}x_{4}-7\,x_{0}x_{4}^{2}+8\,x_{1}x_{4}^{2}-4\,x_{0}^{2}x_{5}-12\,x_{0}x_{1}x_{5}-12\,x_{1}^{2}x_{5}+x_{0}x_{2}x_{5}+4\,x_{1}x_{2}x_{5}+15\,x_{2}^{2}x_{5}-15\,x_{0}x_{3}x_{5}+4\,x_{1}x_{3}x_{5}+5\,x_{2}x_{3}x_{5}-15\,x_{3}^{2}x_{5}+10\,x_{0}x_{4}x_{5}+5\,x_{1}x_{4}x_{5}+6\,x_{2}x_{4}x_{5}-6\,x_{3}x_{4}x_{5}+5\,x_{4}^{2}x_{5}+3\,x_{0}x_{5}^{2}+x_{1}x_{5}^{2}+14\,x_{2}x_{5}^{2}-5\,x_{3}x_{5}^{2}-5\,x_{4}x_{5}^{2}+6\,x_{5}^{3}$;
   
    \item $P=V(x_{2}-x_{3}+7\,x_{4}+4\,x_{5},\,x_{1}+15\,x_{3}-14\,x_{4}-9\,x_{5},\,x_{0}-14\,x_{3}+9\,x_{5})$; $P \cap S$ is empty; $C =  -14\,x_{0}x_{1}^{2}+11\,x_{1}^{3}+14\,x_{0}^{2}x_{2}-11\,x_{0}x_{1}x_{2}-3\,x_{1}^{2}x_{2}+3\,x_{0}x_{2}^{2}+6\,x_{0}x_{1}x_{3}+13\,x_{0}x_{2}x_{3}-3\,x_{1}x_{2}x_{3}+7\,x_{2}^{2}x_{3}+8\,x_{1}x_{3}^{2}-15\,x_{2}x_{3}^{2}-6\,x_{0}^{2}x_{4}-13\,x_{0}x_{1}x_{4}+11\,x_{1}^{2}x_{4}-8\,x_{0}x_{2}x_{4}-7\,x_{1}x_{2}x_{4}-8\,x_{0}x_{3}x_{4}+6\,x_{1}x_{3}x_{4}-13\,x_{2}x_{3}x_{4}+9\,x_{0}x_{4}^{2}+13\,x_{1}x_{4}^{2}-3\,x_{0}^{2}x_{5}-4\,x_{0}x_{1}x_{5}-4\,x_{1}^{2}x_{5}-10\,x_{0}x_{2}x_{5}-14\,x_{1}x_{2}x_{5}+12\,x_{2}^{2}x_{5}-8\,x_{0}x_{3}x_{5}+10\,x_{1}x_{3}x_{5}+2\,x_{2}x_{3}x_{5}+12\,x_{3}^{2}x_{5}+14\,x_{0}x_{4}x_{5}-10\,x_{1}x_{4}x_{5}-2\,x_{2}x_{4}x_{5}+2\,x_{3}x_{4}x_{5}-9\,x_{4}^{2}x_{5}+9\,x_{0}x_{5}^{2}-11\,x_{1}x_{5}^{2}-2\,x_{2}x_{5}^{2}-13\,x_{3}x_{5}^{2}-14\,x_{4}x_{5}^{2}-15\,x_{5}^{3}$;
    \item $P=V(x_{2}+7\,x_{3}-10\,x_{4}+4\,x_{5},\,x_{1}+5\,x_{3}-11\,x_{4}-3\,x_{5},\,x_{0}-6\,x_{3}+9\,x_{4}-10\,x_{5})$; $P \cap S$ consists of 1 point; $C =  12\,x_{0}x_{1}^{2}-6\,x_{1}^{3}-12\,x_{0}^{2}x_{2}+6\,x_{0}x_{1}x_{2}+15\,x_{1}^{2}x_{2}-15\,x_{0}x_{2}^{2}+2\,x_{0}x_{1}x_{3}-7\,x_{1}^{2}x_{3}+5\,x_{0}x_{2}x_{3}-11\,x_{1}x_{2}x_{3}-10\,x_{2}^{2}x_{3}-6\,x_{1}x_{3}^{2}+4\,x_{2}x_{3}^{2}-2\,x_{0}^{2}x_{4}+2\,x_{0}x_{1}x_{4}+15\,x_{1}^{2}x_{4}-4\,x_{0}x_{2}x_{4}+10\,x_{1}x_{2}x_{4}+6\,x_{0}x_{3}x_{4}+11\,x_{1}x_{3}x_{4}-14\,x_{2}x_{3}x_{4}-15\,x_{0}x_{4}^{2}+14\,x_{1}x_{4}^{2}+8\,x_{0}^{2}x_{5}-11\,x_{0}x_{1}x_{5}+3\,x_{1}^{2}x_{5}+4\,x_{1}x_{2}x_{5}-x_{2}^{2}x_{5}-5\,x_{0}x_{3}x_{5}-2\,x_{1}x_{3}x_{5}-9\,x_{2}x_{3}x_{5}-8\,x_{0}x_{4}x_{5}+10\,x_{1}x_{4}x_{5}-11\,x_{2}x_{4}x_{5}-12\,x_{3}x_{4}x_{5}-14\,x_{4}^{2}x_{5}+6\,x_{0}x_{5}^{2}+14\,x_{1}x_{5}^{2}+7\,x_{2}x_{5}^{2}+10\,x_{3}x_{5}^{2}+3\,x_{4}x_{5}^{2}+8\,x_{5}^{3}$;
    \item $P= V(x_{2}+15\,x_{4}-13\,x_{5},\,x_{1}+15\,x_{3}+3\,x_{5},\,x_{0}-15\,x_{3}-10\,x_{4}+14\,x_{5})$; $P \cap S$ consists of 2 points; $C=2\,x_{0}x_{1}^{2}+7\,x_{1}^{3}-2\,x_{0}^{2}x_{2}-7\,x_{0}x_{1}x_{2}-6\,x_{1}^{2}x_{2}+6\,x_{0}x_{2}^{2}+x_{0}x_{1}x_{3}+11\,x_{1}^{2}x_{3}+11\,x_{0}x_{2}x_{3}-8\,x_{1}x_{2}x_{3}+11\,x_{2}^{2}x_{3}-14\,x_{1}x_{3}^{2}-11\,x_{2}x_{3}^{2}-x_{0}^{2}x_{4}+9\,x_{0}x_{1}x_{4}-12\,x_{1}^{2}x_{4}-11\,x_{0}x_{2}x_{4}-11\,x_{1}x_{2}x_{4}+14\,x_{0}x_{3}x_{4}-13\,x_{1}x_{3}x_{4}+2\,x_{2}x_{3}x_{4}-7\,x_{0}x_{4}^{2}-2\,x_{1}x_{4}^{2}+9\,x_{0}^{2}x_{5}+7\,x_{0}x_{1}x_{5}+8\,x_{1}^{2}x_{5}-6\,x_{0}x_{2}x_{5}+3\,x_{1}x_{2}x_{5}+11\,x_{2}^{2}x_{5}-14\,x_{0}x_{3}x_{5}+10\,x_{1}x_{3}x_{5}-7\,x_{2}x_{3}x_{5}+8\,x_{3}^{2}x_{5}+5\,x_{0}x_{4}x_{5}-6\,x_{1}x_{4}x_{5}-13\,x_{2}x_{4}x_{5}-x_{3}x_{4}x_{5}+3\,x_{4}^{2}x_{5}-x_{0}x_{5}^{2}+2\,x_{1}x_{5}^{2}+5\,x_{2}x_{5}^{2}+x_{3}x_{5}^{2}+10\,x_{4}x_{5}^{2}$;
    \item $P = V(x_{5},\,x_{1}+12\,x_{2}-12\,x_{3}-4\,x_{4},\,x_{0}-7\,x_{2}+14\,x_{3}-3\,x_{4})$; $P \cap Z$ consists of 3 points;
    $C=  6\,x_{0}x_{1}^{2}+5\,x_{1}^{3}-6\,x_{0}^{2}x_{2}-5\,x_{0}x_{1}x_{2}-13\,x_{1}^{2}x_{2}+13\,x_{0}x_{2}^{2}-5\,x_{0}x_{1}x_{3}+15\,x_{1}^{2}x_{3}+13\,x_{0}x_{2}x_{3}+7\,x_{1}x_{2}x_{3}-12\,x_{2}^{2}x_{3}+7\,x_{1}x_{3}^{2}-8\,x_{2}x_{3}^{2}+5\,x_{0}^{2}x_{4}+3\,x_{0}x_{1}x_{4}+6\,x_{1}^{2}x_{4}-13\,x_{0}x_{2}x_{4}+12\,x_{1}x_{2}x_{4}-7\,x_{0}x_{3}x_{4}-3\,x_{1}x_{3}x_{4}+3\,x_{2}x_{3}x_{4}+11\,x_{0}x_{4}^{2}-3\,x_{1}x_{4}^{2}-9\,x_{0}^{2}x_{5}+2\,x_{0}x_{1}x_{5}+7\,x_{1}^{2}x_{5}+14\,x_{0}x_{2}x_{5}+13\,x_{1}x_{2}x_{5}-5\,x_{2}^{2}x_{5}-3\,x_{0}x_{3}x_{5}+14\,x_{1}x_{3}x_{5}-8\,x_{2}x_{3}x_{5}+8\,x_{3}^{2}x_{5}+3\,x_{0}x_{4}x_{5}+13\,x_{1}x_{4}x_{5}-2\,x_{2}x_{4}x_{5}+8\,x_{3}x_{4}x_{5}-7\,x_{4}^{2}x_{5}-9\,x_{0}x_{5}^{2}-5\,x_{1}x_{5}^{2}+11\,x_{2}x_{5}^{2}+5\,x_{3}x_{5}^{2}+3\,x_{4}x_{5}^{2}+14\,x_{5}^{3}$; 
    \item $P=V(x_{3}-x_{4}-2\,x_{5},\,x_{1}-x_{2}-14\,x_{5},\,x_{0}-x_{2}-2\,x_{5})$; $P \cap S$ is a line of the ruling of the scroll, i.e. the image of a line passing through $[0:0:1]$ via the map of  $i8$ ; $C=6\,x_{0}x_{1}^{2}+7\,x_{1}^{3}-6\,x_{0}^{2}x_{2}-7\,x_{0}x_{1}x_{2}-15\,x_{1}^{2}x_{2}+15\,x_{0}x_{2}^{2}-14\,x_{0}x_{1}x_{3}+2\,x_{1}^{2}x_{3}+3\,x_{0}x_{2}x_{3}+15\,x_{1}x_{2}x_{3}-7\,x_{2}^{2}x_{3}-12\,x_{1}x_{3}^{2}+4\,x_{2}x_{3}^{2}+14\,x_{0}^{2}x_{4}-5\,x_{0}x_{1}x_{4}+5\,x_{1}^{2}x_{4}+11\,x_{0}x_{2}x_{4}+7\,x_{1}x_{2}x_{4}+12\,x_{0}x_{3}x_{4}+12\,x_{1}x_{3}x_{4}-14\,x_{2}x_{3}x_{4}+15\,x_{0}x_{4}^{2}+14\,x_{1}x_{4}^{2}-12\,x_{0}^{2}x_{5}+3\,x_{0}x_{1}x_{5}+15\,x_{1}^{2}x_{5}-13\,x_{0}x_{2}x_{5}-11\,x_{1}x_{2}x_{5}+10\,x_{2}^{2}x_{5}-2\,x_{0}x_{3}x_{5}-15\,x_{1}x_{3}x_{5}-15\,x_{2}x_{3}x_{5}-9\,x_{3}^{2}x_{5}+8\,x_{0}x_{4}x_{5}-12\,x_{1}x_{4}x_{5}+12\,x_{2}x_{4}x_{5}-9\,x_{3}x_{4}x_{5}-15\,x_{4}^{2}x_{5}-x_{0}x_{5}^{2}+5\,x_{1}x_{5}^{2}-15\,x_{2}x_{5}^{2}-8\,x_{3}x_{5}^{2}+6\,x_{4}x_{5}^{2}+15\,x_{5}^{3}$; 
    \item $P=V(x_{3}+11\,x_{4},\,x_{1}-7\,x_{4},\,x_{0}+8\,x_{4})$; $P \cap S$ is the directrix line of the cubic scroll surface; the directrix line is the image of the exceptional divisor, which is easily obtained via the Segre embedding of $\PP^1\times \PP^2$ as in $i12$  here above; $C=-12\,x_{0}^{3}+10\,x_{0}^{2}x_{1}-10\,x_{0}x_{1}^{2}+11\,x_{0}^{2}x_{2}-10\,x_{0}x_{1}x_{2}-2\,x_{0}^{2}x_{3}+3\,x_{0}x_{1}x_{3}-15\,x_{1}^{2}x_{3}+6\,x_{0}x_{2}x_{3}-5\,x_{1}x_{2}x_{3}+8\,x_{2}^{2}x_{3}-15\,x_{0}x_{3}^{2}-14\,x_{1}x_{3}^{2}-11\,x_{2}x_{3}^{2}-x_{0}^{2}x_{4}-9\,x_{0}x_{1}x_{4}+10\,x_{1}^{2}x_{4}+x_{0}x_{2}x_{4}+15\,x_{1}x_{2}x_{4}-5\,x_{2}^{2}x_{4}-11\,x_{0}x_{3}x_{4}-3\,x_{1}x_{3}x_{4}-12\,x_{2}x_{3}x_{4}+15\,x_{3}^{2}x_{4}-15\,x_{0}x_{4}^{2}+14\,x_{1}x_{4}^{2}+10\,x_{2}x_{4}^{2}-4\,x_{3}x_{4}^{2}-3\,x_{4}^{3}+10\,x_{0}^{2}x_{5}-6\,x_{0}x_{1}x_{5}-5\,x_{1}^{2}x_{5}-12\,x_{0}x_{2}x_{5}+5\,x_{1}x_{2}x_{5}-9\,x_{0}x_{3}x_{5}+11\,x_{1}x_{3}x_{5}+3\,x_{2}x_{3}x_{5}-15\,x_{0}x_{4}x_{5}-11\,x_{1}x_{4}x_{5}-5\,x_{2}x_{4}x_{5}-11\,x_{3}x_{4}x_{5}-14\,x_{0}x_{5}^{2}+9\,x_{1}x_{5}^{2}+11\,x_{4}x_{5}^{2}$. See the discussion in Sect. \ref{tripleint} for more details on this kind of cubics.

\end{enumerate}
   
\end{ex}   

\begin{ex}\label{exveronese}  Smooth cubics containing a Veronese surface and a plane

\begin{enumerate}
    \item $\Pi_1=V(x_{0},\,x_{1},\,x_{2})$; $\Pi_1 \cap Z$ is a smooth conic of genus 0; $C =  2\,x_{0}x_{1}^{2}-6\,x_{1}^{3}+8\,x_{0}x_{1}x_{2}-3\,x_{1}^{2}x_{2}-4\,x_{0}x_{2}^{2}-12\,x_{1}x_{2}^{2}-15\,x_{2}^{3}-2\,x_{0}^{2}x_{3}+6\,x_{0}x_{1}x_{3}+x_{1}^{2}x_{3}-7\,x_{0}x_{2}x_{3}-x_{1}x_{2}x_{3}-8\,x_{2}^{2}x_{3}-x_{0}x_{3}^{2}+13\,x_{2}x_{3}^{2}-8\,x_{0}^{2}x_{4}+10\,x_{0}x_{1}x_{4}-11\,x_{1}^{2}x_{4}+12\,x_{0}x_{2}x_{4}+12\,x_{1}x_{2}x_{4}+3\,x_{2}^{2}x_{4}+12\,x_{0}x_{3}x_{4}-13\,x_{1}x_{3}x_{4}-x_{2}x_{3}x_{4}-13\,x_{0}x_{4}^{2}-6\,x_{1}x_{4}^{2}-10\,x_{2}x_{4}^{2}+4\,x_{0}^{2}x_{5}+10\,x_{1}^{2}x_{5}+15\,x_{0}x_{2}x_{5}+3\,x_{1}x_{2}x_{5}+6\,x_{2}^{2}x_{5}-x_{0}x_{3}x_{5}+7\,x_{1}x_{3}x_{5}-15\,x_{2}x_{3}x_{5}-6\,x_{0}x_{4}x_{5}-6\,x_{1}x_{4}x_{5}-6\,x_{2}x_{4}x_{5}-6\,x_{0}x_{5}^{2}+6\,x_{1}x_{5}^{2}$;
    \item $P=  V(x_{2}-6\,x_{3}-2\,x_{4}-14\,x_{5},\,x_{1}+8\,x_{3}-10\,x_{4}+11\,x_{5},\,x_{0}+11\,x_{3}-11\,x_{4}-9\,x_{5})$; $P \cap Z$ is empty; $C =7\,x_{0}x_{1}^{2}+12\,x_{1}^{3}+2\,x_{0}x_{1}x_{2}-10\,x_{1}^{2}x_{2}-6\,x_{0}x_{2}^{2}+6\,x_{1}x_{2}^{2}+10\,x_{2}^{3}-7\,x_{0}^{2}x_{3}-12\,x_{0}x_{1}x_{3}-12\,x_{1}^{2}x_{3}-12\,x_{0}x_{2}x_{3}-11\,x_{1}x_{2}x_{3}+12\,x_{0}x_{3}^{2}-4\,x_{2}x_{3}^{2}-2\,x_{0}^{2}x_{4}-9\,x_{0}x_{1}x_{4}+2\,x_{1}^{2}x_{4}-8\,x_{0}x_{2}x_{4}-5\,x_{1}x_{2}x_{4}+9\,x_{0}x_{3}x_{4}+4\,x_{1}x_{3}x_{4}+9\,x_{2}x_{3}x_{4}+10\,x_{0}x_{4}^{2}-2\,x_{1}x_{4}^{2}+11\,x_{2}x_{4}^{2}-x_{3}x_{4}^{2}-12\,x_{4}^{3}+6\,x_{0}^{2}x_{5}+2\,x_{0}x_{1}x_{5}+10\,x_{1}^{2}x_{5}-10\,x_{0}x_{2}x_{5}-10\,x_{1}x_{2}x_{5}+6\,x_{2}^{2}x_{5}-15\,x_{0}x_{3}x_{5}-7\,x_{1}x_{3}x_{5}+11\,x_{2}x_{3}x_{5}+x_{3}^{2}x_{5}+10\,x_{0}x_{4}x_{5}+9\,x_{1}x_{4}x_{5}+9\,x_{2}x_{4}x_{5}+12\,x_{3}x_{4}x_{5}-4\,x_{4}^{2}x_{5}-6\,x_{0}x_{5}^{2}-9\,x_{1}x_{5}^{2}+4\,x_{3}x_{5}^{2}$;
   
    \item $P = V(x_{2}+13\,x_{3}+4\,x_{4}-10\,x_{5},\,x_{1}-5\,x_{3}+12\,x_{4}-10\,x_{5},\,x_{0}-11\,x_{3}-7\,x_{4}+15\,x_{5})$; $P \cap Z$ consists of 1 point; $C =-8\,x_{0}x_{1}^{2}+8\,x_{1}^{3}+13\,x_{0}x_{1}x_{2}+10\,x_{1}^{2}x_{2}-14\,x_{0}x_{2}^{2}-4\,x_{1}x_{2}^{2}-13\,x_{2}^{3}+8\,x_{0}^{2}x_{3}-8\,x_{0}x_{1}x_{3}+13\,x_{1}^{2}x_{3}-8\,x_{0}x_{2}x_{3}+12\,x_{1}x_{2}x_{3}-11\,x_{2}^{2}x_{3}-13\,x_{0}x_{3}^{2}+12\,x_{2}x_{3}^{2}-13\,x_{0}^{2}x_{4}-2\,x_{0}x_{1}x_{4}-14\,x_{1}^{2}x_{4}+12\,x_{0}x_{2}x_{4}+2\,x_{1}x_{2}x_{4}+9\,x_{2}^{2}x_{4}+2\,x_{0}x_{3}x_{4}-12\,x_{1}x_{3}x_{4}-14\,x_{2}x_{3}x_{4}-13\,x_{0}x_{4}^{2}+9\,x_{1}x_{4}^{2}+12\,x_{2}x_{4}^{2}-9\,x_{3}x_{4}^{2}+5\,x_{4}^{3}+14\,x_{0}^{2}x_{5}-8\,x_{0}x_{1}x_{5}+2\,x_{1}^{2}x_{5}+13\,x_{0}x_{2}x_{5}-6\,x_{1}x_{2}x_{5}-5\,x_{2}^{2}x_{5}-11\,x_{0}x_{3}x_{5}+5\,x_{1}x_{3}x_{5}+4\,x_{2}x_{3}x_{5}+9\,x_{3}^{2}x_{5}-3\,x_{0}x_{4}x_{5}+15\,x_{1}x_{4}x_{5}-4\,x_{2}x_{4}x_{5}-5\,x_{3}x_{4}x_{5}+14\,x_{4}^{2}x_{5}+5\,x_{0}x_{5}^{2}+4\,x_{1}x_{5}^{2}-14\,x_{3}x_{5}^{2}$;
    \item $P =  V(x_{2}+12\,x_{3}+2\,x_{4}-7\,x_{5},\,x_{1}-x_{3}-5\,x_{5},\,x_{0}+7\,x_{3}-4\,x_{4}+10\,x_{5})$; $P \cap Z$ consists of 2 points; $C =  2\,x_{0}x_{1}^{2}+9\,x_{1}^{3}-7\,x_{0}x_{1}x_{2}+x_{1}^{2}x_{2}+15\,x_{0}x_{2}^{2}+x_{1}x_{2}^{2}+3\,x_{2}^{3}-2\,x_{0}^{2}x_{3}-9\,x_{0}x_{1}x_{3}+4\,x_{1}^{2}x_{3}-x_{0}x_{2}x_{3}-2\,x_{1}x_{2}x_{3}-5\,x_{2}^{2}x_{3}-4\,x_{0}x_{3}^{2}-8\,x_{2}x_{3}^{2}+7\,x_{0}^{2}x_{4}-2\,x_{1}^{2}x_{4}+10\,x_{0}x_{2}x_{4}+13\,x_{1}x_{2}x_{4}+x_{2}^{2}x_{4}+4\,x_{0}x_{3}x_{4}+8\,x_{1}x_{3}x_{4}+5\,x_{2}x_{3}x_{4}+12\,x_{0}x_{4}^{2}-x_{1}x_{4}^{2}-6\,x_{2}x_{4}^{2}+6\,x_{3}x_{4}^{2}-2\,x_{4}^{3}-15\,x_{0}^{2}x_{5}-11\,x_{0}x_{1}x_{5}-2\,x_{1}^{2}x_{5}-3\,x_{0}x_{2}x_{5}-8\,x_{2}^{2}x_{5}+13\,x_{0}x_{3}x_{5}-4\,x_{1}x_{3}x_{5}-14\,x_{2}x_{3}x_{5}-6\,x_{3}^{2}x_{5}-x_{0}x_{4}x_{5}-11\,x_{1}x_{4}x_{5}+7\,x_{2}x_{4}x_{5}+2\,x_{3}x_{4}x_{5}+12\,x_{4}^{2}x_{5}+,x_{1}x_{3}x_{5}-7\,x_{2}x_{3}x_{5}-2\,x_{0}x_{4}x_{5}+3\,x_{0}x_{5}^{2}$;
    \item $P=  V(x_{2}-13\,x_{3}-3\,x_{4}-13\,x_{5},\,x_{1}+9\,x_{3}+5\,x_{4}+14\,x_{5},\,x_{0}+x_{3}-4\,x_{4}-5\,x_{5})$; $P \cap Z$ consists of 3 points; $C =  -10\,x_{0}x_{1}^{2}+15\,x_{1}^{3}-13\,x_{0}x_{1}x_{2}+10\,x_{1}^{2}x_{2}-7\,x_{0}x_{2}^{2}-3\,x_{1}x_{2}^{2}+9\,x_{2}^{3}+10\,x_{0}^{2}x_{3}-15\,x_{0}x_{1}x_{3}+15\,x_{1}^{2}x_{3}-4\,x_{1}x_{2}x_{3}+7\,x_{2}^{2}x_{3}-15\,x_{0}x_{3}^{2}-5\,x_{2}x_{3}^{2}+13\,x_{0}^{2}x_{4}-10\,x_{0}x_{1}x_{4}-x_{1}^{2}x_{4}+3\,x_{0}x_{2}x_{4}+6\,x_{1}x_{2}x_{4}-15\,x_{2}^{2}x_{4}+5\,x_{0}x_{3}x_{4}+5\,x_{1}x_{3}x_{4}+x_{1}x_{4}^{2}+8\,x_{2}x_{4}^{2}-3\,x_{3}x_{4}^{2}+14\,x_{4}^{3}+7\,x_{0}^{2}x_{5}-9\,x_{1}^{2}x_{5}-9\,x_{0}x_{2}x_{5}+2\,x_{1}x_{2}x_{5}-12\,x_{2}^{2}x_{5}-4\,x_{0}x_{3}x_{5}-x_{1}x_{3}x_{5}-x_{2}x_{3}x_{5}+3\,x_{3}^{2}x_{5}+13\,x_{0}x_{4}x_{5}-7\,x_{1}x_{4}x_{5}-14\,x_{2}x_{4}x_{5}-14\,x_{3}x_{4}x_{5}+11\,x_{4}^{2}x_{5}+12\,x_{0}x_{5}^{2}+14\,x_{1}x_{5}^{2}-11\,x_{3}x_{5}^{2}$;
    \item The Veronese surface can degenerate into $Z=S(1,2) \cup P$, union of a cubic scroll surface $S(1,2)$ and a plane such that $P \cap S(1,2)$ is the directrix line of the scroll; in this case $\gamma = P\cdot (S(1,2)\cup P)=4$ (see Sect. \ref{tripleint}). A cubic fourfold in $\mathcal{C}_{20}$ containing this degeneration of a Veronese surface is automatically contained in $\mathcal{C}_{8} \cap \mathcal{C}_{12}$; $C=-12\,x_{0}^{3}+10\,x_{0}^{2}x_{1}-10\,x_{0}x_{1}^{2}+11\,x_{0}^{2}x_{2}-10\,x_{0}x_{1}x_{2}-2\,x_{0}^{2}x_{3}+3\,x_{0}x_{1}x_{3}-15\,x_{1}^{2}x_{3}+6\,x_{0}x_{2}x_{3}-5\,x_{1}x_{2}x_{3}+8\,x_{2}^{2}x_{3}-15\,x_{0}x_{3}^{2}-14\,x_{1}x_{3}^{2}-11\,x_{2}x_{3}^{2}-x_{0}^{2}x_{4}-9\,x_{0}x_{1}x_{4}+10\,x_{1}^{2}x_{4}+x_{0}x_{2}x_{4}+15\,x_{1}x_{2}x_{4}-5\,x_{2}^{2}x_{4}-11\,x_{0}x_{3}x_{4}-3\,x_{1}x_{3}x_{4}-12\,x_{2}x_{3}x_{4}+15\,x_{3}^{2}x_{4}-15\,x_{0}x_{4}^{2}+14\,x_{1}x_{4}^{2}+10\,x_{2}x_{4}^{2}-4\,x_{3}x_{4}^{2}-3\,x_{4}^{3}+10\,x_{0}^{2}x_{5}-6\,x_{0}x_{1}x_{5}-5\,x_{1}^{2}x_{5}-12\,x_{0}x_{2}x_{5}+5\,x_{1}x_{2}x_{5}-9\,x_{0}x_{3}x_{5}+11\,x_{1}x_{3}x_{5}+3\,x_{2}x_{3}x_{5}-15\,x_{0}x_{4}x_{5}-11\,x_{1}x_{4}x_{5}-5\,x_{2}x_{4}x_{5}-11\,x_{3}x_{4}x_{5}-14\,x_{0}x_{5}^{2}+9\,x_{1}x_{5}^{2}+11\,x_{4}x_{5}^{2}$.

\end{enumerate}

\end{ex}

\bibliography{bib_tocho}
\bibliographystyle{abbrv}
\end{document}